\documentclass{amsart}

\usepackage[utf8]{inputenc}
\usepackage[T1]{fontenc}

\usepackage{tikz}
\usepackage{lipsum}

\usepackage{pstricks}
\usepackage{caption}
\usepackage{subcaption}
\captionsetup{compatibility=false}
\usepackage{float}
\usepackage{enumerate}
\usepackage{paralist}
\usepackage{todonotes}
\bibliographystyle{alpha}

\usepackage{amsthm}
\usepackage{amsmath}
\usepackage{amssymb}
\usepackage{amsrefs}
\usepackage{mathtools}
\theoremstyle{plain}
\newtheorem{theorem}{Theorem}
\newtheorem{proposition}[theorem]{Proposition}

\newtheorem{corollary}[theorem]{Corollary}
\theoremstyle{definition}
\newtheorem{definition}[theorem]{Definition}

\theoremstyle{remark}
\newtheorem{remark}[theorem]{Remark}

\usepackage[english]{babel}

\newcommand{\mb}[1]{{\mathbb{#1}}}
\newcommand{\mc}[1]{{\mathcal{#1}}}
\newcommand{\mr}[1]{{\mathrm{#1}}}
\DeclareMathOperator{\sgn}{sgn}
\DeclareMathOperator{\lk}{lk}
\newcommand{\R}{\mathbb{R}}

\newcommand{\Of}[1]{\Omega{#1}^\uparrow}
\newcommand{\Ob}[1]{\Omega{#1}^\downarrow}

\newcommand{\psunits}[1]{\psset{xunit={#1},yunit={#1},runit={#1}}}
\def\psu{.4pt}
\def\sep{70}
\psunits{\psu}

\newcommand{\move}[5]
{%
    \newcount\width
    \width=#4
    \advance\width by 300
    \psunits{#5}
    \begin{subfigure}{0.49\textwidth} 
        \centering
        \begin{pspicture}(\width,150)
            \input{./#1.pst}
            \rput[Bl](185,75){
                \input{./Rchange.pst}
            }
            \rput[Bl](220,0){
                \input{./#2.pst}
            }
            \rput[B](185,90){#3}
        \end{pspicture}
    \end{subfigure}%
}

\title{Minimal generating sets of directed oriented Reidemeister moves}
\author{Piotr Suwara}
\address{Faculty of Mathematics, Informatics and Mechanics, University of Warsaw, Warsaw, Poland}
\curraddr{Department of Mathematics, Massachusetts Institute of Technology, Cambridge, MA 02139}
\email{suwara@mit.edu}

\date{\today}

\begin{document}

\begin{abstract}
    Polyak 
    proved that the set $\{\Omega1a,\Omega1b,\Omega2a,\Omega3a\}$
    is a~minimal generating set of oriented Reidemeister moves.
    One may distinguish between \emph{forward} and \emph{backward}
    moves,
    obtaining $32$ different types of moves, which we call
    \emph{directed oriented Reidemeister moves}.
    In this article we prove that the set of $8$ directed Polyak moves
    $\{\Of{1a},\Ob{1a},\Of{1b},\Ob{1b},\Of{2a},\Ob{2a},\Of{3a},\Ob{3a}\}$
    is a~minimal generating set of directed oriented Reidemeister moves.
    We also specialize the problem,
    introducing the notion of a~\emph{$L$-generating set}
    for a~link $L$.
    The same set is proven to be a~minimal $L$-generating set
    for any link $L$ with at least $2$ components.
    Finally, we discuss knot diagram invariants
    arising in the study of $K$-generating sets 
    for an~arbitrary knot $K$,
    emphasizing the distinction between
    \emph{ascending} and \emph{descending} moves of type $\Omega3$.
\end{abstract}

\maketitle

\section{Introduction}
\label{sec:intro}

A~knot or link in $\R^3$ can be represented by its \emph{diagram},
which is a~generic projection of the knot or link on $\R^2$,
admitting no singularities, triple points and non-transversal double points,
together with a~decoration of the double points indicating
the choice of \emph{overcrossings} and \emph{undercrossings}.
The theorem of Reidemeister \cite{R27} states that 
two diagrams represent the same link if and only if they can be connected
by a sequence of \emph{Reidemeister moves} 
of three distinct types $\Omega1, \Omega2$ and $\Omega3$
(see Figure~\ref{fig:omega}).
\begin{figure}[H]
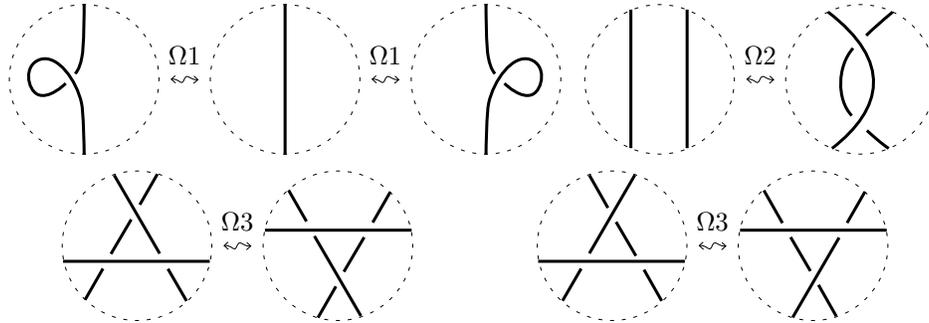
 
    \psunits{.38pt}
    \begin{subfigure}{0.59\textwidth} 
        \centering
        \begin{pspicture}(550,150)
            \input{./R1-1.pst}
            \rput[Bl](175,75){
                \input{./Rchange.pst}
            }
            \rput[B](175,90){$\Omega1$}
            \rput[Bl](200,0) {
                \input{./R1-0.pst}
            }
            \rput[Bl](375,75){
                \input{./Rchange.pst}
            }
            \rput[B](375,90){$\Omega1$}
            \rput[Bl](400,0) {
                \input{./R1-2.pst}
            }
        \end{pspicture}
    \end{subfigure}
    \begin{subfigure}{0.39\textwidth} 
        \centering
        \begin{pspicture}(350,150)
            \input{./R2-1.pst}
            \rput[Bl](175,75){
                \input{./Rchange.pst}
            }
            \rput[Bl](200,0) { \input{./R2-2.pst} }

            \rput[B](175,90){$\Omega2$}
        \end{pspicture}
    \end{subfigure}

    \vspace{5pt}
    \begin{subfigure}{0.49\textwidth} 
        \centering
        \begin{pspicture}(350,150)
            \input{./R3x-1.pst}
            \rput[Bl](175,75){
                \input{./Rchange.pst}
            }
            \rput[Bl](200,0) { \input{./R3x-2.pst} }

            \rput[B](175,90){$\Omega3$}
        \end{pspicture}
    \end{subfigure}
    \begin{subfigure}{0.49\textwidth} 
        \centering
        \begin{pspicture}(350,150)
            \input{./R3y-1.pst}
            \rput[Bl](175,75){
                \input{./Rchange.pst}
            }
            \rput[Bl](200,0) { \input{./R3y-2.pst} }

            \rput[B](175,90){$\Omega3$}
        \end{pspicture}
    \end{subfigure}
    \caption{Reidemeister moves.}
    \label{fig:omega} 
\end{figure}
Considering oriented diagrams (diagrams of oriented knots or links),
one obtains $16$ different types of \emph{oriented Reidemeister moves}
(see Figures~\ref{fig:omega1}, \ref{fig:omega2}, \ref{fig:omega3}).
Polyak proved in \cite{P10b} that the set
$\{\Omega1a,\Omega1b,\Omega2a,\Omega3a\}$
is sufficient to obtain all oriented Reidemeister moves.
Moreover, he showed that there is no smaller
(in terms of the number of elements)
set of oriented Reidemeister moves.
This finding reduces the procedure of checking whether
a~function of a~link diagram is in fact a~link invariant
to examining changes of the function under only $4$ types of moves.
A~similar study has been carried out by Kim, Joung and Lee \cite{KJL15}
for Yoshikawa moves on surface-link diagrams.
However, they have not proved that any of the generating sets
they found is minimal. 
\begin{figure}[H] 
    \foreach \a/\b in {a/b,c/d} {
        \move{R10}{R1\a}{$\Omega1\a$}{\sep}{\psu}
        \move{R10}{R1\b}{$\Omega1\b$}{\sep}{\psu}
        \vspace{10pt}
    }
    \vspace{-10pt}
    \caption{Oriented moves of type $\Omega1$.}
    \label{fig:omega1} 
\end{figure}
\begin{figure}[H] 
    \foreach \a/\b in {a/b,c/d} {
        \move{R2\a-1}{R2\a-2}{$\Omega2\a$}{\sep}{\psu}
        \move{R2\b-1}{R2\b-2}{$\Omega2\b$}{\sep}{\psu}
        \vspace{10pt}
    }
    \vspace{-10pt}
    \caption{Oriented moves of type $\Omega2$.}
    \label{fig:omega2} 
\end{figure}
\begin{figure}[h] 
    \foreach \a/\b in {a/b,c/d,e/f,g/h} {
        \move{R3\a-1}{R3\a-2}{$\Omega3\a$}{\sep}{\psu}
        \move{R3\b-1}{R3\b-2}{$\Omega3\b$}{\sep}{\psu}
        \vspace{10pt}
    }
    \vspace{-10pt}
    \caption{Oriented moves of type $\Omega3$.}
    \label{fig:omega3} 
\end{figure}

We rephrase Polyak's result introducing the notion
of a~generating set of moves:
\begin{definition}
    [generating set of moves]
    A~set $A$ of moves on oriented tangle diagrams is 
    called \emph{tangle-generating}
    (shortly, \emph{generating})
    if for any two tangle diagrams $T_1, T_2$
    representing the same tangle,
    one can obtain $T_2$ from $T_1$ using moves from $A$.
\end{definition}
Tangles are more general objects than knots and links
and in particular diagrams of oriented Reidemeister moves
may be considered tangles.
The theorem of Reidemeister generalizes for tangles:
the set of all oriented Reidemeister moves is tangle-generating.
Therefore a~set $A$ is tangle-generating
if and only if every oriented Reidemeister move
(in both directions)
may be obtained using moves from $A$
.
Thus we will drop the \emph{tangle-} prefix
and call such sets \emph{generating}.
Moreover, the result of Polyak may be phrased as follows:
the set 
$\{\Omega1a,\Omega1b,\Omega2a,\Omega3a\}$
is a~minimal (with respect to size) generating subset
of oriented Reidemeister moves.

We now generalize the problem, considering 
\emph{directed oriented Reidemeister moves},
that is, distinguishing between 
\emph{forward} and \emph{backward} moves.
\begin{definition}[directed oriented Reidemeister moves]
    We will call a~Reidemeister move of type $\Omega1$ or $\Omega2$
    \emph{forward} if it increases the number of crossings
    and \emph{backward} if it decreases the number of crossings.

    For an $\Omega3$ move, 
    let us call the triangle formed by the three crossings in the $\Omega3$ move
    diagram the \emph{vanishing triangle}. 
    There is an ordering of its sides coming from the fact that they belong to
    distinct strands, and we order them bottom-middle-top.
    This ordering gives us an orientation of the vanishing triangle.
    Now let $n$ be the number of its sides on which this orientation
    coincides with the orientation of the diagram. 
    Let $q = (-1)^n$.
    Any $\Omega3$ move changes $q$ 
    since it changes $n$ by $\pm1$ or $\pm3$.
    We define forward moves to be precisely those that change $q=-1$ to $q=+1$.

    Forward moves are presented in Figures~\ref{fig:omega1}, \ref{fig:omega2}
    and \ref{fig:omega3},
    when considering them as moves from the diagram to the left
    to the diagram to the right.
    We denote forward moves using $\uparrow$
    and backward using $\downarrow$, e.g. $\Of{1a}$,
    $\Ob{2c}$ or $\Of{3h}$.
\end{definition}
These notions are motivated by the definitions of 
\emph{positive} and \emph{negative} moves
on plane curves introduced by Arnold \cite{A94},
but slightly modified, as suggested by \"Ostlund \cite{O01}.
Moreover, in Subsection \ref{subsec:mgs-3} we present
an~equivalent definition of forward and backward
moves of type $\Omega3$.

This way we obtain $32$ distinct moves.
Motivated by Polyak's work, we seek to find 
a~minimal generating subset of these.

The only known results concerning this problem
are direct consequences of results concerning
generating sets of oriented Reidemeister moves:
a~set $A$ of oriented Reidemeister moves is generating
if and only if the set of both forward and backward types of moves from $A$
is generating.
In particular, Polyak's results imply that the set
$\{\Of{1a},\Ob{1a},\Of{1b},\Ob{1b},\Of{2a},$ $\Ob{2a},\Of{3a},\Ob{3a}\}$
which we call \emph{(directed) Polyak moves}
is generating,
and every generating subset of directed oriented moves
consists of at least $4$ moves.
These results are not sharp:
potentially, there could be a~smaller generating set,
in particular a~proper subset of Polyak moves
could be generating.

We prove that this is not the case:
\begin{theorem}
    [minimal generating set]
    The set of directed Polyak moves
    $$\{\Of{1a},\Ob{1a},\Of{1b},\Ob{1b},\Of{2a},\Ob{2a},\Of{3a},\Ob{3a}\}$$
    is a~minimal generating set of oriented directed Reidemeister moves.

    More generally, any generating subset 
    of directed oriented Reidemeister moves
    must contain:
    \begin{enumerate}
        \item at least one move from each of the sets
            $\{\Of{1a},\Ob{1d}\}$, $\{\Ob{1a},\Of{1d}\}$,
            \\ $\{\Of{1b},\Ob{1c}\}$, $\{\Ob{1b},\Of{1c}\}$,
        \item at least one forward ($\Of{2}$)
            and backward ($\Ob{2}$)
            move of type $\Omega2$,
        \item at least one forward ($\Of{3}$)
            and backward ($\Ob{3}$)
            move of type $\Omega3$.
    \end{enumerate}
    \label{thm:main}
\end{theorem}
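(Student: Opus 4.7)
The plan is to prove each of the three items by producing an integer-valued invariant of oriented tangle diagrams that reacts in a controlled way only to a specific subset of moves, and then to argue that if a generating set $A$ omits the required move, some diagram transformation becomes unrealisable.

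For the $\Omega 1$ item I will use the pair $(w(D), W(D))$ consisting of the writhe and the Whitney rotation index. Both are unchanged by every $\Omega 2$ move (a pair of cancelling crossings, no net tangent winding) and every $\Omega 3$ move (which only permutes crossings). A direct inspection of Figure~\ref{fig:omega1} shows that the eight directed $\Omega 1$ moves realise precisely the four changes $(\pm 1, \pm 1)\in\Z^2$, with the two moves in each pair $\{\Of{1a},\Ob{1d}\}$, $\{\Ob{1a},\Of{1d}\}$, $\{\Of{1b},\Ob{1c}\}$, $\{\Ob{1b},\Of{1c}\}$ inducing the same change. A short nonnegative-coefficient check — for instance, $a(1,-1)+b(-1,1)+c(-1,-1)=(1,1)$ forces $c=-1$, which is impossible — shows that none of the four changes lies in the nonnegative cone spanned by the other three. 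Hence any generating set must contain at least one representative from each of the four pairs, to realise the transformation between two diagrams of the same tangle with the corresponding $(w,W)$-difference.

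For the $\Omega 2$ item I will exploit tangles with at least two components. The count $c_{ij}(D)$ of crossings whose two branches lie on distinct components is preserved by every $\Omega 1$ move (single-strand), every $\Omega 3$ move (permutation of crossings), and every same-component $\Omega 2$ move; it is changed by $+2$ by a forward different-component $\Omega 2$ move and by $-2$ by a backward one. Comparing the empty two-strand tangle with its image under $\Of{2a}$ forces any generating set to contain at least one forward $\Omega 2$ move, and the reverse comparison forces a backward one.

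The $\Omega 3$ item is the main obstacle, since writhe, rotation number and between-component crossing counts are all preserved by every $\Omega 3$ move. The approach is to build a $\Z$-valued invariant $I$ of oriented tangle diagrams with the property that every forward $\Omega 3$ move increases $I$ by a fixed positive constant, every backward $\Omega 3$ move decreases it by the same constant, and $I$ is unchanged by all $\Omega 1$ and $\Omega 2$ moves. A natural candidate is a signed global count built from the local quantity $q=(-1)^n$ from the definition of forward and backward $\Omega 3$ — for example a weighted sum over triangular faces of the diagram, or a Gauss-diagram formula in the spirit of Polyak--Viro — with the weights arranged so that the births and deaths of local configurations caused by $\Omega 1$ and $\Omega 2$ cancel exactly. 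Once such an $I$ is available, the absence of any $\Of{3}$ (resp.\ $\Ob{3}$) from $A$ obstructs the transformation between diagrams with $I(D')>I(D)$ (resp.\ $<$), and the three items together yield the lower bound $4+2+2=8$; since the directed Polyak set has exactly $8$ generators, it is minimal.
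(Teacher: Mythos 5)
Your treatments of items (1) and (2) are correct and coincide with the paper's: the pair (writhe, winding number) --- equivalently the paper's $w_\pm = n \pm c$ --- separates the four $\Omega1$ pairs exactly as you describe, and the count of crossings between distinct components, evaluated on a link with at least two components, forces one forward and one backward $\Omega2$ move.

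The gap is item (3), which you correctly identify as the main obstacle but then do not actually close: you describe the properties an invariant $I$ would need and gesture at ``a weighted sum over triangular faces'' or ``a Gauss-diagram formula,'' but you never construct $I$ or verify that any candidate has the required behaviour under $\Omega1$ and $\Omega2$. This is not a deferrable detail --- it is the entire content of Theorem~\ref{thm:omega3} and of Section~\ref{subsec:mgs-3}. Worse, the object you ask for is strictly stronger than what the paper builds: a $\Z$-valued invariant of \emph{all} oriented tangle (in particular knot) diagrams, unchanged by $\Omega1$ and $\Omega2$ and increased by a fixed positive constant by \emph{every} $\Of{3}$ move, would in particular settle whether both $\Of{3a}$ and $\Ob{3a}$ are needed in $K$-generating subsets of Polyak moves for a knot $K$ --- precisely the question the paper leaves open at the end of Section~\ref{sec:Polyak}. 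The paper instead works with links of at least two components and constructs two weaker invariants: $CI$, built from $\sgn(p)\cdot\mr{Ind}_\gamma(p)$ with $\gamma$ the component not through $p$, which reacts only to $\Omega3$ moves involving three distinct components; and $OCI$, built from half-indices along the overcrossing curve, which under a forward $\Omega3$ move involving exactly two components changes by $0$ or $1$ \emph{depending on which two strands share a component} (Proposition~\ref{prop:oci}). Neither changes by a fixed constant under all forward $\Omega3$ moves, and the proof then exhibits a specific diagram (Figure~\ref{fig:omega3prep}) admitting an $\Of{3a}$ move of the particular type that does change the invariant. To repair your argument you must supply a concrete invariant, the full case analysis of its behaviour under all move types (the delicate part being the two-component $\Omega3$ cases), and a realizing pair of diagrams; without that, the claimed lower bound of $8$ does not follow.
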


Polyak \cite{P10b} showed the existence
of $4$-element sets of oriented Reidemeister moves
which satisfy the conditions above,
but are not generating.
Therefore these conditions are not sufficient
to determine whether a~set is generating.

To prove that some set is not generating,
it suffices to prove that it is not $L$-generating for some $L$:
\begin{definition}
    [$L$-generating set]
    Let $L$ be a~link.
    A~set $A$ of moves is \emph{$L$-generating},
    if any two diagrams $L_1, L_2$ of $L$
    are connected by a~sequence of moves from $A$.
\end{definition}
Indeed, if $A$ is generating,
then it is $L$-generating for any link $L$.
To prove Theorem \ref{thm:main}
we show the following:
\begin{theorem}
    [$\Omega1$ in $L$-generating sets]
    For any link $L$, any $L$-generating
    subset of directed oriented Reidemeister moves 
    contains at least:
    \begin{itemize}
        \item $1$ move from the set $\{\Of{1a}, \Ob{1d}\}$,
        \item $1$ move from the set $\{\Ob{1a}, \Of{1d}\}$,
        \item $1$ move from the set $\{\Of{1b}, \Ob{1c}\}$,
        \item $1$ move from the set $\{\Ob{1b}, \Of{1c}\}$.
    \end{itemize}
    \label{thm:omega1}
\end{theorem}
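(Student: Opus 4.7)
The plan is to use two classical combinatorial invariants of link diagrams, the writhe $w(D)$ and the total Whitney rotation number $r(D)$ (the sum over components of the rotation number of the underlying plane projection), and to show that each of the four pairs in the statement is isolated by the behavior of the single linear combination $w\pm r$ under directed oriented Reidemeister moves. Because $w$ and $r$ are integers and, as will be seen, the moves of types $\Omega 2$ and $\Omega 3$ preserve both, ruling out an $L$-generating subset reduces to simple integer bookkeeping.

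First I would verify that $w$ and $r$ are both invariant under every directed oriented move of types $\Omega 2$ and $\Omega 3$. For $\Omega 3$, no crossing is created or destroyed, so $w$ is constant, and the underlying projection is smoothly isotopic before and after, so $r$ is constant. For $\Omega 2$, the two crossings involved carry opposite signs, so $w$ is constant; and the underlying projections before and after again differ by a smooth isotopy of the plane (through the tangency at which the move takes place), so $r$ is constant as well.

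Next I would compute $(\Delta w,\Delta r)$ on the eight directed $\Omega 1$ moves from Figure~\ref{fig:omega1}. Each such move adds or removes one crossing and creates or destroys one small loop in the projection, so by a direct case analysis (matching the sign of the crossing and the orientation of the loop against the labels in the figure) one obtains
\begin{align*}
(\Delta w,\Delta r)(\Of{1a}) &= (+1,+1), & (\Delta w,\Delta r)(\Of{1d}) &= (-1,-1),\\
(\Delta w,\Delta r)(\Of{1b}) &= (+1,-1), & (\Delta w,\Delta r)(\Of{1c}) &= (-1,+1),
\end{align*}
with the backward moves giving the opposite signs. Consequently $w+r$ increases by $2$ exactly under $\Of{1a}$ and $\Ob{1d}$ and decreases by $2$ exactly under $\Ob{1a}$ and $\Of{1d}$, while $w-r$ increases by $2$ exactly under $\Of{1b}$ and $\Ob{1c}$ and decreases by $2$ exactly under $\Ob{1b}$ and $\Of{1c}$. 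No other directed oriented move alters either quantity.

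To conclude, I would argue by contradiction four times in parallel. Fix any diagram $D$ of $L$, and for each of the four pairs construct a companion diagram $D'$ by performing one move of the pair inside a small disk disjoint from the rest of $D$, so that $D'$ still represents $L$ while the relevant combination $w\pm r$ changes by $\pm 2$. If an $L$-generating set $A$ contained neither move of the pair, then by the preceding computation every move in $A$ would change $w\pm r$ strictly in the wrong direction or not at all, so no finite sequence of $A$-moves applied to $D$ could reach $D'$, contradicting the $L$-generating property. The only delicate point in the whole argument is the correct definition of the rotation number $r(D)$ for projections with crossings and the careful sign accounting in the $\Omega 1$ case analysis; once those are in place the rest is integer arithmetic that is insensitive to the choice of $L$.
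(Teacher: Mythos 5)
Your proposal is correct and follows essentially the same route as the paper: the paper's proof also uses the writhe $n$ and the winding number $c$ (your rotation number, possibly up to an overall sign convention) together with the combinations $n\pm c$, notes their invariance under $\Omega2$ and $\Omega3$, and tabulates their changes under the eight directed $\Omega1$ moves to isolate each of the four pairs. The only difference is cosmetic sign bookkeeping in the table; the pairings you extract agree with the statement, so nothing further is needed.
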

\begin{theorem}
    [$\Omega2$ in $L$-generating sets, for non-knot $L$]
    For any~link $L$ with at least $2$ components,
    any $L$-generating 
    subset of directed oriented Reidemeister moves contains at least
    $1$ move of type $\Of{2}$ and $1$ move of type $\Ob{2}$.

    \label{thm:omega2}
\end{theorem}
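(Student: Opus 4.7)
The plan is to define a numerical quantity on diagrams of $L$ that is unchanged by $\Omega1$ and $\Omega3$ moves but strictly monotone under $\Omega2$ moves between strands of distinct components. Explicitly, for a diagram $D$ of $L$, set $m(D)$ to be the number of crossings of $D$ whose two strands belong to distinct components of $L$.

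First I would verify the three local behaviours of $m$. An $\Omega1$ move changes only a self-crossing, so $m$ is unchanged. An $\Omega3$ move does not alter which unordered pair of strands is responsible for each of the three crossings involved — it only slides those crossings past one another — so $m$ is again unchanged. An $\Omega2$ move involves exactly two strands: if these lie in the same component of $L$, then $m$ is preserved; if they lie in distinct components, a forward $\Omega2$ increases $m$ by $2$ and a backward $\Omega2$ decreases $m$ by $2$. Consequently, along any sequence of directed moves $m$ is non-decreasing if no backward $\Omega2$ move is used, and non-increasing if no forward $\Omega2$ move is used.

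Next I would produce two diagrams of $L$ whose $m$-values differ. Since $L$ has at least two components, one can take any diagram $D_0$ of $L$, apply a planar ambient isotopy to bring a strand of one component alongside a strand of another, and then perform a forward $\Omega2$ move between those two strands to obtain a diagram $D_1$ of $L$ with $m(D_1)=m(D_0)+2$. By the monotonicity observation above, any sequence of moves from an $L$-generating set $A$ that converts $D_0$ into $D_1$ must include at least one forward $\Omega2$ move, while any sequence converting $D_1$ into $D_0$ must include at least one backward $\Omega2$ move. Hence $A$ contains at least one $\Of{2}$ move and at least one $\Ob{2}$ move.

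The only non-trivial step is the invariance of $m$ under $\Omega3$, which I expect to be the main obstacle, but it reduces to checking that in an $\Omega3$ configuration the same unordered pair of strands is responsible for each crossing before and after the move; the remainder of the argument is bookkeeping. The two-component hypothesis enters exactly once, namely to guarantee that two strands of distinct components can be juxtaposed in the plane so as to perform the initial $\Omega2$ move producing $D_1$.
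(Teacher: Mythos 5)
Your argument is exactly the paper's: both use the count of crossings between distinct components, observe that it is preserved by $\Omega1$, $\Omega3$, and same-component $\Omega2$ moves, and changes by $\pm 2$ under $\Omega2$ moves between distinct components, then exhibit two diagrams of $L$ differing by such a move. The proposal is correct and takes essentially the same approach as the paper.
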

\begin{theorem}
    [$\Omega3$ in $L$-generating sets, for non-knot $L$]
    For any~link $L$ with at least $2$ components,
    any $L$-generating 
    subset of directed oriented Reidemeister moves contains at least
    $1$ move of type $\Of{3}$ and $1$ move of type $\Ob{3}$.

    \label{thm:omega3}
\end{theorem}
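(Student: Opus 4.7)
The plan is to construct, for any link $L$ with at least two components, a combinatorial invariant $\mu$ of oriented diagrams of $L$ (taking values in $\Z$) satisfying: (i) $\mu$ is preserved by every directed $\Omega1$ and every directed $\Omega2$ move; (ii) $\mu$ is non-decreasing under every $\Of{3}$ move and non-increasing under every $\Ob{3}$ move; (iii) some $\Omega3$ move applicable to a diagram of $L$ changes $\mu$ strictly. Once such a $\mu$ is in hand, both conclusions follow at once: pick a forward $\Omega3$ move $D \to D'$ with $\mu(D) < \mu(D')$. Since $D$ and $D'$ are both diagrams of $L$, they must be connected by any $L$-generating set. But if a move set $A$ contains no $\Of{3}$ move, then $\mu$ is non-increasing along any $A$-sequence starting at $D$, so $D'$ is unreachable, contradicting $L$-generation. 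The reverse move $D' \to D$ is a $\Ob{3}$ move strictly decreasing $\mu$, which symmetrically forces the presence of at least one $\Ob{3}$ move in any $L$-generating set.

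The substance of the proof is therefore the construction of $\mu$, and it is here that the hypothesis of at least two components enters in an essential way. Fix two distinct components $K_1, K_2$ of $L$ and let $X(D)$ denote the set of crossings of $D$ at which a strand of $K_1$ meets a strand of $K_2$. Define $\mu(D)$ as a signed count over ordered pairs (or triples) of elements of $X(D)$, weighted by the over/under and sign data at each crossing together with the relative cyclic positions of these crossings along $K_1$ and along $K_2$. Because every $\Omega1$ move creates or destroys a self-crossing of a single component, $X(D)$ and the induced cyclic orderings are untouched by $\Omega1$, so half of property (i) holds for free; for a knot diagram this argument would collapse, since every crossing is a self-crossing, and this is exactly why the statement requires $L$ to have at least two components.

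The hard part will be calibrating the weights so that the $\Omega2$-invariance in (i) and the $\Omega3$-monotonicity in (ii) hold simultaneously. For an $\Omega2$ move between $K_1$ and $K_2$, two new crossings appear, adjacent on both strands and of opposite signs; all resulting contributions to $\mu$---including interactions of the two new crossings with each other and with the rest of $X(D)$---must cancel identically, regardless of the global shape of $D$. For $\Omega3$, the three participating crossings are permuted in cyclic order along one strand while the other two cyclic orderings are unchanged, and the algebraic change in $\mu$ must be shown to be aligned in sign with the forward/backward classification via $q = (-1)^n$. I expect this to require a finite case analysis over the eight $\Omega3$ types and over the distributions of the three participating strands among the components of $L$. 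Analogous (undirected) cancellations are carried out in Polyak's work \cite{P10b}; the novelty here is to track signs with enough care to obtain strict monotonicity rather than mere invariance, using the definition of $n$ as a count of edges of the vanishing triangle whose induced orientation agrees with the strand orientation.
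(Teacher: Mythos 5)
Your reduction is sound and is exactly the architecture of the paper's proof: exhibit a diagram invariant $\mu$ that is preserved by all $\Omega1$ and $\Omega2$ moves, is non-decreasing under every $\Of{3}$ move and non-increasing under every $\Ob{3}$ move, and is strictly changed by some $\Omega3$ move realizable on a diagram of $L$; then the unreachability argument you give yields both conclusions. The paper does precisely this, taking $\mu = OCI$, the sum over crossings $p$ between distinct components of $\sgn(p)$ times the half-index of $p$ with respect to the component containing the overcrossing strand; it verifies that $OCI$ is unchanged by $\Omega1$, $\Omega2$, and by $\Omega3$ moves whose three strands lie in one component or in three distinct components, and that it changes by $0$ or $+1$ under any $\Of{3}$ move involving exactly two components (by $+1$ exactly when the top and bottom strands share a component), and finally produces, for any $L$ with at least two components, a diagram admitting an $\Of{3a}$ move of the strictly increasing kind.

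The genuine gap is that you never construct $\mu$; the construction and its verification are the entire mathematical content of the theorem, and your proposal explicitly defers them (``the hard part will be calibrating the weights,'' ``I expect this to require a finite case analysis''). What you sketch --- a signed count over pairs or triples of crossings between two fixed components $K_1,K_2$, weighted by their relative cyclic positions along $K_1$ and $K_2$ --- is a Gauss-diagram-type invariant of a different flavor from the paper's $OCI$, and it is not at all clear that the weights can be calibrated so that $\Omega2$-invariance and directed $\Omega3$-monotonicity hold simultaneously; this is exactly the nontrivial claim, and no candidate formula is offered that one could check. A secondary inaccuracy: under an $\Omega3$ move the two crossings lying on each of the three participating strands swap their order along that strand (the vanishing triangle flips), so your description of the effect on the cyclic orderings --- one strand permuted, the other two unchanged --- is not the correct starting point for the case analysis you propose. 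As it stands the proposal is a correct proof strategy, coinciding with the paper's, but not a proof.
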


In fact, we answer the question
of finding a~minimal $L$-generating set for any link $L$
which is not a~knot.

It would be interesting to know if a~similar result
holds for $K$-generating sets when $K$ is a~knot.
This problem seems to be much harder to solve
and therefore we reduce it to the question
whether the set of directed Polyak moves
has $K$-generating subsets.
Theorem \ref{thm:omega1} readily implies
\begin{corollary}
    [$\Omega1$ in $L$-generating subsets of Polyak moves]
    For any link $L$,
    any $L$-generating subset of directed Polyak moves
    contains moves $\Of{1a}, \Ob{1a}, \Of{1b}$ and $\Ob{1b}$.
\end{corollary}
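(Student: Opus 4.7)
The plan is to simply invoke Theorem \ref{thm:omega1} and observe that the list of directed Polyak moves contains no $\Omega1c$ or $\Omega1d$ moves whatsoever. First I would let $A \subseteq \{\Of{1a},\Ob{1a},\Of{1b},\Ob{1b},\Of{2a},\Ob{2a},\Of{3a},\Ob{3a}\}$ be an arbitrary $L$-generating subset of directed Polyak moves. Since every $L$-generating subset of directed oriented Reidemeister moves is in particular a subset satisfying the hypotheses of Theorem \ref{thm:omega1}, the theorem forces $A$ to include at least one move from each of the four pairs $\{\Of{1a},\Ob{1d}\}$, $\{\Ob{1a},\Of{1d}\}$, $\{\Of{1b},\Ob{1c}\}$, and $\{\Ob{1b},\Of{1c}\}$.

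The punchline is that $\Ob{1d},\Of{1d},\Ob{1c},\Of{1c}$ are not among the directed Polyak moves, so in each of the four pairs the second element is unavailable to $A$. Consequently the first element of each pair must appear in $A$, i.e.\ $\Of{1a}, \Ob{1a}, \Of{1b}, \Ob{1b}$ all lie in $A$, which is exactly the conclusion of the corollary. There is no substantive obstacle here: the statement is a purely set-theoretic restriction of Theorem \ref{thm:omega1} to the subfamily of Polyak moves, and all the actual work has already been done in proving Theorem \ref{thm:omega1}.
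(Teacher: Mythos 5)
Your proposal is correct and is exactly the argument the paper intends: the corollary is stated as an immediate consequence of Theorem~\ref{thm:omega1}, and since none of the moves $\Of{1c},\Ob{1c},\Of{1d},\Ob{1d}$ belongs to the set of directed Polyak moves, each of the four pairs in that theorem forces the corresponding $\Omega1a$ or $\Omega1b$ move to lie in the subset. Nothing is missing.
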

A~similar result holds for moves of type $\Omega2$:
\begin{theorem}[$\Omega2$ in $L$-generating subsets of Polyak moves]
    \label{thm:polyak2moves}
    For any link $L$,
    any $L$-generating subset of directed Polyak moves
    contains moves
    $\Of{2a}$ and $\Ob{2a}$.
\end{theorem}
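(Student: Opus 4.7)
The plan is to exhibit a single integer-valued diagram invariant $I$ that is preserved by the six moves $\Of{1a},\Ob{1a},\Of{1b},\Ob{1b},\Of{3a},\Ob{3a}$ and that strictly changes by $\pm 2$ under $\Of{2a}$ and $\Ob{2a}$ (with $\Of{2a}$ increasing $I$ and $\Ob{2a}$ decreasing it). Once such $I$ is in hand, both halves of the theorem will follow by a uniform monotonicity argument: a subset $A\subseteq\{\Of{1a},\Ob{1a},\Of{1b},\Ob{1b},\Of{2a},\Ob{2a},\Of{3a},\Ob{3a}\}$ missing $\Of{2a}$ would force $I$ to be non-increasing along every $A$-path, while any $D' = \Of{2a}(D)$ has $I(D')=I(D)+2$ and both diagrams represent $L$; hence $D'$ is unreachable from $D$, contradicting $L$-generation. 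The argument for $\Ob{2a}$ is symmetric, with $I$ non-decreasing along $A$-paths.

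My candidate is $I(D)=c(D)-s(D)$, where $c(D)$ is the number of crossings and $s(D)$ is the number of Seifert circles produced by Seifert smoothing of $D$. First I would check that under an $\Omega1$-move both $c$ and $s$ change by $\pm 1$ with the same sign, since the kink loop Seifert-smooths to a new small Seifert circle, so $I$ is unchanged. Next, for $\Omega3a$, I would perform the local case analysis of the Seifert smoothings at the three triangle crossings, verifying that the reconnection pattern of the six external strand endpoints agrees before and after the move; this preserves both $c$ and $s$. Finally, for $\Of{2a}$, which is the co-oriented form of $\Omega2$, the move increases $c$ by $2$, and at each of the two new crossings the Seifert smoothing reconnects the four strand endpoints into two arcs that locally restore the pair of parallel strands present before the move, so $s$ is unchanged; hence $I$ increases by $2$, and $\Ob{2a}$ as the inverse decreases $I$ by~$2$.

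To conclude, for any link $L$ one fixes any diagram $D$ of $L$ (after, if needed, a planar isotopy producing a pair of co-oriented parallel arcs, which is always possible) and sets $D'=\Of{2a}(D)$; then $D,D'$ are both diagrams of $L$ with $I(D')=I(D)+2$, so any $A$ missing $\Of{2a}$ cannot be $L$-generating. Symmetrically, picking a diagram $D'$ with a co-oriented $\Omega2a$-bigon and $D=\Ob{2a}(D')$ gives $I(D)=I(D')-2$, handling the case of $\Ob{2a}$. The main obstacle I expect is the careful local verification that $\Delta s = 0$ under $\Omega3a$ and under the co-oriented $\Omega2a$: this is a routine case analysis of Seifert-smoothing patterns at oriented crossings, but it must be carried out explicitly, and it is essential that $\Omega2a$ be the co-oriented $\Omega2$-variant, since the analogous conclusion would not be available for an anti-oriented $\Omega2$ move.
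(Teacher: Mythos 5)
Your argument is correct, but it runs on a different invariant than the paper's. The paper's proof is a one-liner: it uses $n_-$, the number of negative crossings, observing that $\Omega1a$ and $\Omega1b$ both insert \emph{positive} kinks (see Table~\ref{tab:changes-writhe-winding}, where the writhe changes by $+1$ for both) and that $\Omega3$ moves permute crossings without changing their signs, while $\Of{2a}$ creates exactly one negative crossing; the same monotonicity argument you describe then finishes the proof. Your invariant $I=c-s$ is the (shifted, doubled) genus of the canonical Seifert surface of the diagram, and your three local verifications are all correct: every $\Omega1$ move changes $c$ and $s$ by the same unit, the oriented resolution of the vanishing triangle induces the same connection pattern on the six boundary points before and after any $\Omega3$ move, and a coherent $\Omega2$ bigon resolves to two parallel arcs, so $\Delta s=0$ and $\Delta I=+2$ under $\Of{2a}$. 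You are also right to flag that everything hinges on $\Omega2a$ being the co-oriented variant; it is, in Polyak's labelling followed here (this is also visible in Table~\ref{tab:HN-P-change}, where $\Of{2a}$ changes $I_{\lk}$ by $X_n+Y_{n+1}$ rather than $X_n+Y_n$). The trade-off between the two approaches: $n_-$ requires essentially no local analysis but depends on the accident that the two Polyak $\Omega1$ moves are both positive kinks (it would fail against $\Omega1c$, $\Omega1d$); your $c-s$ is blind to \emph{all} $\Omega1$ and $\Omega3$ moves regardless of orientation, at the cost of the Seifert-state case analysis and of being ill-behaved under antiparallel $\Omega2$ moves (where $\Delta s$ is not even locally determined), so neither invariant strictly subsumes the other. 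Your closing existence step (every link has a diagram admitting an $\Of{2a}$ move) deserves one explicit sentence --- e.g.\ after an $\Of{1a}$ kink the two arcs bounding a quadrant adjacent to the new crossing are coherently oriented --- but this is the same level of detail the paper itself supplies.
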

We also present partial results concerning moves of type $\Omega3$,
distinguishing between \emph{ascending} and \emph{descending}
moves of type $\Omega3$ (see Definition \ref{def:asc-desc}).

On the other hand, for any link $L$,
the set $\{\Omega1a, \Omega1b, \Omega2a, \Omega3a\}$
is a~minimal generating subset of (undirected) oriented Reidemeister moves.
Indeed, Hagge \cite{H06} proved that for any knot $K$
(and therefore for any link, too)
there exist two diagrams $K_1, K_2$ of $K$ such that
one cannot obtain $K_2$ from $K_1$ without using moves of type $\Omega2$,
and there exist diagrams $K_3, K_4$ of $K$
such that $K_4$ cannot be obtained from $K_3$ without using moves of type $\Omega3$.
These, together with Theorem \ref{thm:omega1},
(proof of which mirrors the proof of Lemma 3.1 from \cite{P10b}),
proves that any $L$-generating subset of oriented Reidemeister moves 
contains at least
$2$ moves of type $\Omega1$,
$1$ move of type $\Omega2$ and $1$ move of type $\Omega3$.

The article begins with the proofs of Theorems \ref{thm:omega1}, \ref{thm:omega2}
and \ref{thm:omega3} in Section \ref{sec:mgs},
from which Theorem \ref{thm:main} follows.
The key ingredient to the proof of Theorem \ref{thm:omega3}
is the introduction of the invariants $CI$ and $OCI$,
which are thoroughly studied.
In Section \ref{sec:Polyak} we study knot diagram invariants
and their changes under Polyak moves,
emphasizing the difference between ascending and descending
moves of type $\Omega3$.
An invariant $HNP$,
which is a~special case of an~invariant defined by Hass and Nowik,
\cite{HN08} is introduced and discussed.
Moreover, families of invariants defined by \"Ostlund \cite{O01},
distinguishing between ascending and descending moves,
are briefly recalled.

The author would like to thank his advisor,
Maciej Borodzik,
for his insight and patience.

\section{Minimal Generating Sets}
\label{sec:mgs}
In this section we prove Theorem \ref{thm:main}
by proving Theorems \ref{thm:omega1}, \ref{thm:omega2},
\ref{thm:omega3}.

\subsection{$\Omega1$ and $\Omega2$ moves}
\label{subsec:mgs-12}

The following proof mirrors the proof of Lemma 3.1 from \cite{P10b}.
\begin{proof}[Proof of Theorem~\ref{thm:omega1}]
    The writhe $n$ and the winding number $c$
    of a~link diagram do not change under Reidemeister moves
    of type $\Omega2$ and $\Omega3$.
    Consider their sum $w_+=n+c$ 
    and difference $w_-=n-c$.
    \begin{table}[H]
        \centering
        \begin{tabular}{|l|c|c|c|c|}
            \hline
            Invariant & $\Of{1a}$ & $\Of{1b}$ & $\Of{1c}$ & $\Of{1d}$ \\
            \hline
            $n=$writhe       & $+1$      & $+1$      & $-1$      & $-1$      \\
            $c=$winding number       & $-1$      & $+1$      & $-1$      & $+1$      \\
            \hline
            $w_+ = n+c $ & $0$ & $+2$      & $-2$      & $0$       \\
            $w_- = n-c $ & $+2$& $0$       & $0$       & $-2$      \\
            \hline
        \end{tabular}
        \caption{Changes of the writhe and the winding number with respect to 
        Polyak moves.}
        \label{tab:changes-writhe-winding}
    \end{table}
    Notice $w_+$ increases only under $\Of{1b}$ and $\Ob{1c}$ moves.
    Consider a~diagram $D$ of a~link $L$ 
    and a~diagram $D'$ obtained from $D$
    by an~$\Of{1b}$ move. 
    Then ${w_+(D')-w_+(D) = 2}$, so any set of Reidemeister moves
    which transforms $D$ into $D'$ has to include at least one of the moves
    $\Of{1b}$ or $\Ob{1c}$.
    Therefore any $L$-generating set of moves contains one of these.
    Moreover ${w_+(D) - w_+(D') = -2}$, 
    and therefore any $L$-generating set of moves contains 
    at least one of the moves $\{\Ob{1b},\Of{1c}\}$.

    A~similar argument for $w_-$ shows that any $L$-generating set of moves
    contains at least one move from $\{\Of{1a}, \Ob{1d}\}$
    and from $\{\Ob{1a},\Of{1d}\}$.
\end{proof}

\begin{proof}[Proof of Theorem~\ref{thm:omega2}]

    $\Omega1$ and $\Omega3$ moves preserve the number of crossings
    between different components of the link diagram.
    The same is true for $\Omega2$ moves 
    between strands of the same link component.

    On the other hand, any $\Of{2}$ move 
    between strands belonging to different components of the link
    diagram creates $2$ such crossings
    and any $\Ob{2}$ move between strands of distinct components
    cancels $2$ such crossings.
    Let $L$ be a~link with at least $2$ components (i.e. not a~knot).
    Since any diagram of such link $L$ admits an~$\Of{2}$ move,
    therefore any $L$-generating set contains
    a~move of type $\Of{2}$ and a~move of type $\Ob{2}$.
\end{proof}

\subsection{$\Omega3$ moves}
\label{subsec:mgs-3}

\begin{definition}
    Denote by $\mc{C}_d(D)$ the set of crossings 
    of different components of a~diagram $D$.

    For $p \notin \gamma(S^1)$,
    denote by $\mr{Ind}_\gamma(p)$ the index of a~point $p\in \R^2$
    with respect to a~curve $\gamma: S^1 \to \R^2$.

    For $p \in \mc{C}(D)$,
    denote by $\sgn(p) \in \{-1,+1\}$
    the \emph{sign} of the crossing $p$.

    By a~\emph{changing disc} of a~(oriented, directed oriented) 
    Reidemeister move we mean the disc in the plane the move takes place in,
    as depicted in Figures~\ref{fig:omega1}, \ref{fig:omega2}, 
    \ref{fig:omega3} above.
\end{definition}

\begin{definition}[crossing index of a~diagram]
    Let $D$ be a~diagram of a~$3$-component link.
    For each crossing $p \in \mc{C}_d(D)$, 
    define its \emph{crossing index} as 
    $$CI(p) = \sgn(p) \cdot \mr{Ind}_\gamma(p),$$
    where $\gamma$ is 

    the component of the link diagram
    that does not pass through $p$.

    Now set the \emph{crossing index} of $D$ to be
    $$CI(D) =\ \sum_{\mathclap{p \in \mc{C}_d(D)}}\ CI(p).$$

    Finally, let $D$ be a~diagram of any $n$-component link,
    where $n \neq 3$.
    Let $D_1, \ldots, D_n$ denote the components of $D$.
    We define the crossing index of $D$ to be
    $$CI(D) =
    \quad \sum_{\mathclap{1 \leq i < j < k \leq n}}
    \quad CI(D|_{i,j,k}),$$
    where $D|_{i,j,k}$ denotes a~diagram obtained from $D$
    by forgetting all components other than $D_i, D_j$ and $D_k$.
\end{definition}

\begin{remark}
    This invariant is a~variation of Vassiliev's 
    index-type invariants of ornaments \cite{V94}.
\end{remark}

We may give an~equivalent, more direct definition of $CI$.

\begin{definition}
    [overcrossing and undercrossing curve]
    Let $D$ be a~diagram and $p$ be one of its crossings.
    Denote by $\gamma_p^o$ the curve of the diagram
    passing through $p$ as an~overcrossing.
    Denote by $\gamma_p^u$ the curve of the diagram
    passing through $p$ as an~undercrossing.
\end{definition}

\begin{proposition}
    [alternative description of $CI$]
    Let $D$ be a~$n$-component link diagram 
    and $\gamma_1, \ldots, \gamma_n$
    be the curves of the components of its diagram.
    Then
    $$CI(D) = \sum_{p \in \mc{C}_d(D)} 
    \quad \ 
    \sum_{
        \mathclap{\substack{
            1 \leq i \leq n,\\
            \gamma_i \neq \gamma_p^o, \gamma_i \neq \gamma_p^u
        }}
    }
    \ 
    \sgn(p) \cdot \mr{Ind}_{\gamma_i}(p).$$
    \begin{proof}
        For $n=3$, the formula coincides with the definition of $CI$
        for $3$-component links.
        Using this fact,
        by the definition of $CI$ for arbitrary $n$ we obtain:
        \begin{align*}
            CI(D)
            &= \quad
            \sum_{\mathclap{1 \leq i < j < k \leq n}}
            \quad 
            \left(
            \quad \ 
            \sum_{\mathclap{p \in \mc{C}_d(D|_{i,j})}}
            \ \sgn(p) \mr{Ind}_{\gamma_k}(p)
            + \sum_{\mathclap{p \in \mc{C}_d(D|_{j,k})}}
            \ \sgn(p) \mr{Ind}_{\gamma_i}(p)
            + \sum_{\mathclap{p \in \mc{C}_d(D|_{i,k})}}
            \ \sgn(p) \mr{Ind}_{\gamma_j}(p)
            \right)
            \\&
            = \quad
            \sum_{\mathclap{\substack{
                1 \leq i < j \leq n, \\
                1 \leq k \leq n, \\
                k \neq i, k \neq j
            }}}
            \qquad \ \ 
            \sum_{\mathclap{p \in \mc{C}_d(D|_{i,j})}}
            \ 
            \sgn(p) \mr{Ind}_{\gamma_k}(p)
            \\
            &=
            \ 
            \sum_{\mathclap{p \in \mc{C}_d(D)}}
            \qquad
            \sum_{
                \mathclap{\substack{
                    1 \leq k \leq n,\\
                    \gamma_k \neq \gamma_p^o, \gamma_k \neq \gamma_p^u
                }}
            } \ 
            \sgn(p) \cdot \mr{Ind}_{\gamma_k}(p).
        \end{align*}
        where $D|_{i,j}$ denotes the diagram obtained from $D$
        by forgetting all components other than $D_i$ and $D_j$.
        This finishes the proof.
    \end{proof}
\end{proposition}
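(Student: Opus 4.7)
The plan is to verify the formula in the base case $n=3$ and then reduce the general case to it by a bookkeeping argument, swapping the order of summation.

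First I would handle $n=3$. For a $3$-component diagram, any crossing $p \in \mc{C}_d(D)$ involves exactly two of the three components, so there is a unique index $i$ with $\gamma_i \neq \gamma_p^o$ and $\gamma_i \neq \gamma_p^u$, namely the component $\gamma$ of the original definition. Thus the inner sum collapses to the single term $\sgn(p)\cdot \mr{Ind}_\gamma(p)$, and the right-hand side agrees with $CI(D)$ by definition. This case is essentially a tautology, but it is the crucial hinge for the induction-free rearrangement that follows.

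Next, for arbitrary $n$, I would start from the definition $CI(D) = \sum_{1\le i<j<k\le n} CI(D|_{i,j,k})$ and expand each $CI(D|_{i,j,k})$ using the $n=3$ formula just established. This writes $CI(D)$ as a sum over ordered triples $(i,j,k)$ of three terms, indexed by which pair of components carries the crossing. The key observation, which is purely symbolic, is that the condition ``$p \in \mc{C}_d(D|_{i,j})$ and $k \notin \{i,j\}$'' depends on $k$ only through $k \neq i,j$, so the triple sum rearranges into $\sum_{p \in \mc{C}_d(D)}\sum_{\substack{1 \leq k \leq n,\\ \gamma_k \neq \gamma_p^o,\gamma_k \neq \gamma_p^u}} \sgn(p)\cdot \mr{Ind}_{\gamma_k}(p)$, which is the stated formula.

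The only thing to check carefully is the reindexing step: one must verify that every ordered triple $(i,j,k)$ with $i<j<k$ contributes exactly once to a given pair $(p,k)$ with $p$ a crossing between components $D_a$ and $D_b$ (where $\{a,b\} = \{\gamma_p^o, \gamma_p^u\}$ relabeled by component index) and $k \notin \{a,b\}$. This is a routine multiset argument: each such pair $(p,k)$ corresponds to the unique triple obtained by sorting $\{a,b,k\}$. This step is really the main (if minor) obstacle, since it requires keeping track of the symmetrization between $i<j<k$ and the unordered roles of the three components in the expanded formula. Once that bookkeeping is done, the proposition follows.
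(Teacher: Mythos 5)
Your proposal is correct and follows essentially the same route as the paper: verify the $n=3$ case directly from the definition, expand $CI(D)=\sum_{i<j<k}CI(D|_{i,j,k})$ using that case, and exchange the order of summation, with the only care being the bijection between ordered triples $i<j<k$ and pairs (crossing, third component). Nothing further is needed.
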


\begin{figure}[h]
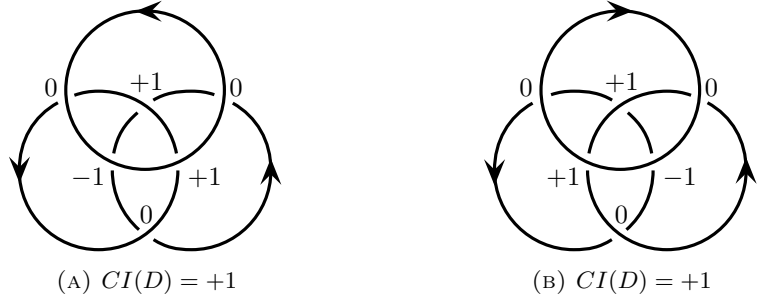

    \begin{subfigure}{0.49\textwidth}
        \centering
        \begin{pspicture}(260,240)
            \input{./3unlink-1.pst}
            \rput(40,155){$0$}
            \rput(215,155){$0$}
            \rput(130,35){$0$}
            \rput(130,160){$+1$}
            \rput(75,70){$-1$}
            \rput(185,70){$+1$}
        \end{pspicture}
        \caption{$CI(D) = +1$}
        \label{fig:omega3possible}
    \end{subfigure}
    \begin{subfigure}{0.49\textwidth}
        \centering
        \begin{pspicture}(260,240)
            \input{./3unlink-2.pst}
            \rput(40,155){$0$}
            \rput(215,155){$0$}
            \rput(130,35){$0$}
            \rput(130,160){$+1$}
            \rput(75,70){$+1$}
            \rput(185,70){$-1$}
        \end{pspicture}
        \caption{$CI(D) = +1$}
    \end{subfigure}

    \caption{Calculations of $CI(p)$, $CI(D)$ for example diagrams
        of a~$3$-component unlink.
    Crossing indices $CI(p)$ are shown next to the crossings.}
    \label{fig:ci-examples}
\end{figure}

\begin{proposition}
    The quantity $CI$ is invariant under moves of type $\Omega1$ and $\Omega2$, 
    and under moves of type $\Omega3$ which involve at least two strands
    of the same component.
    It increases by $1$ under $\Of{3}$ moves
    involving three strands of different components.

    \begin{proof}
        It follows from the construction of $CI$
        for arbitrary link diagram
        that it is sufficient to prove the claim
        for diagrams of $3$-component links.
        Therefore we assume that $D$ is a~diagram of a~$3$-component link.

        Any Reidemeister move does not change indices of points
        outside the changing disc. 
        It also does not change signs of crossings
        outside the changing disc.
        Therefore it does not change $CI(p)$ for any crossing $p$ outside
        the changing disc. 
        It suffices to check how these moves change $CI(p)$ for the crossings
        inside the changing disc.

        An $\Omega1$ move does not create or cancel any crossings between distinct
        components of a~link.
        Therefore $\Omega1$ moves do not change $CI(D)$.

        An $\Omega2$ move creates or cancels two crossings. 
        If two strands of the $\Omega2$ move belong 
        to the same component of the link diagram,
        then the move does not create or cancel any crossings
        between different components, so it preserves $CI(D)$.
        If two strands of the $\Omega2$ move belong 
        to different components of the link diagram,
        then both crossings that are created or cancelled are of different
        signs (one positive and one negative) and of the same index with respect
        to the third component, since they can be connected by a~curve
        that does not intersect the other component.
        Therefore the contributions of both crossings to $CI(D)$ cancel, 
        so $CI(D)$ is preserved by $\Omega2$ moves.

        An $\Omega3$ move, in general, does not change the set of crossings
        of the diagram, but only changes the placement 
        of three crossings involved.
        For a~crossing $p$ of two components involved in this move,
        its sign does not change, but its index with respect to 
        the third component may change (see Figure~\ref{fig:correspondence}).

        \begin{figure}[h]
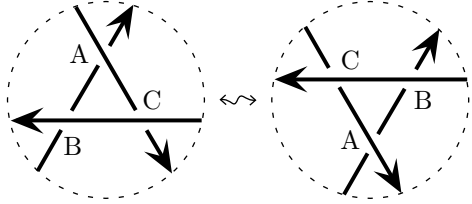

            \psunits{0.5pt}
            \centering
            \begin{pspicture}(350,150)
                \input{./R3a-1.pst}
                \rput(55,110) {A}
                \rput(50,40) {B}
                \rput(110,75) {C}
                \rput[Bl](175,75){
                    \input{./Rchange.pst}
                }
                \rput[Bl](200,0) {
                    \input{./R3a-2.pst}
                    \rput(60,45) {A}
                    \rput(115,75) {B}
                    \rput(60,105) {C}
                }
        
            \end{pspicture}
            \caption{Corresponding crossings of $\Omega3a$ move.}
            \label{fig:correspondence}
        \end{figure}

        An $\Omega3$ move that involves three strands of the same component
        does not change any crossings between different components,
        and so leaves $CI(D)$ unchanged.

        An $\Omega3$ move that involves two strands of the same component
        and one strand of any other component involves only crossings 
        between these two components.
        Since it does not change their indices with respect to the third component 
        (all points in the changing disc have the same index
        with respect to that component), 
        it does not change $CI(D)$.

        Consider an $\Omega3$ move that involves three strands 
        of different components.
        For a~crossing $p$ involved in this move,
        let $\gamma$ be the diagram component not passing through $p$,
        and $S$ be the strand taking part in the $\Omega3$ move
        contained in $\gamma$.
        The $\Omega3$ move changes the index of $p$ with respect to $\gamma$
        by $+1$ if the move shifts $p$ from the right to the left of strand $S$
        and by $-1$ if the move shifts $p$ from the left to the right of $S$.
        In the first case, $CI(p)$ changes by $+1$ if crossing $p$ is positive
        and by $-1$ if it is negative.
        In the second case, $CI(p)$ changes by $-1$ if crossing $p$ is positive
        and by $+1$ it it is negative.

        For $\Omega3$ moves involving $3$ different components,
        depicted in Figure~\ref{fig:omega3ci},
        the signs of the crossings (diagrams to the left)
        and changes of $CI(p)$ for the crossings (diagrams to the right)
        are written down.
        Summing all the changes of $CI(p) = \sgn(p)\mr{Ind}_\gamma(p)$ 
        for the three crossings of a~move,
        it follows that $CI(D)$ changes by $+1$ for moves
        of type $\Of{3}$ and by $-1$ for moves of type $\Ob{3}$.
        
        \newcommand{\cichange}[7]{%
            \newcount\width
            \width=\sep
            \advance\width by 300
            \psunits{\psu}
            \begin{subfigure}{0.49\textwidth}
                \centering
                \begin{pspicture}(\width,150)
                    \input{./R#1-1.pst}
                    \rput(54,42) {$#2$}
                    \rput(56,108) {$#3$}
                    \rput(110,72) {$#4$}
                    \rput[Bl](185,75){
                        \input{./Rright.pst}
                    }
                    \rput[Bl](220,0) {
                        \input{./R#1-2.pst}
                        \rput(36,74) {$#5$}
                        \rput(98,42) {$#6$}
                        \rput(89,102) {$#7$}

                    }
                    \rput[B](185,90){$\Of{#1}$}
                \end{pspicture}
            \end{subfigure}
        }
        \begin{figure}[h]
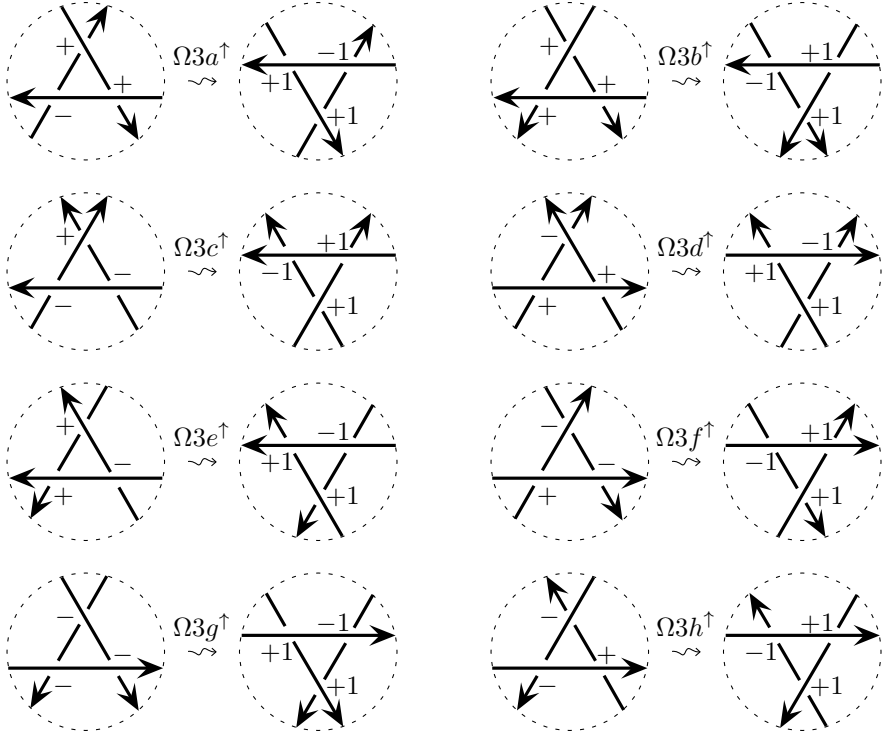
 
            \cichange{3a}{-}{+}{+}{+1}{+1}{-1}
            \cichange{3b}{+}{+}{+}{-1}{+1}{+1}

            \vspace{10pt}
            \cichange{3c}{-}{+}{-}{-1}{+1}{+1}
            \cichange{3d}{+}{-}{+}{+1}{+1}{-1}

            \vspace{10pt}
            \cichange{3e}{+}{+}{-}{+1}{+1}{-1}
            \cichange{3f}{+}{-}{-}{-1}{+1}{+1}

            \vspace{10pt}
            \cichange{3g}{-}{-}{-}{+1}{+1}{-1}
            \cichange{3h}{-}{-}{+}{-1}{+1}{+1}
            \caption{Signs (to the left) and changes of $CI(p)$ (to the right) 
            for corresponding crossings of moves of type $\Omega3$.}
            \label{fig:omega3ci}
        \end{figure}
    \end{proof}
\end{proposition}

\begin{proof}
    [Proof of Theorem~\ref{thm:omega3} for links of at least $3$ components]
    Let $L$ be a~link diagram with at least $3$ components.
    Having any diagram of $L$,
    by an~appropriate sequence of Reidemeister moves
    one can obtain a~diagram $D$ of $L$ which admits a~$\Of{3}$ move
    involving $3$ different components,
    by first making strands of $3$ different components
    bound one of the regions of the plane,
    and then making $\Omega2$ moves to obtain a~diagram
    admitting an~$\Of{3a}$ move,
    as in Figure~\ref{fig:omega3prep}.
    \begin{figure}[h]
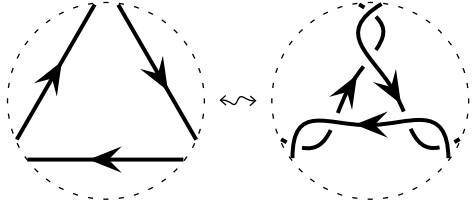

        \psunits{0.5pt}
        \centering
        \begin{pspicture}(350,150)
            \input{./R3af-prep.pst}
            \rput[Bl](175,75){
                \input{./Rchange.pst}
            }
            \rput[Bl](200,0) {
                \input{./R3af-done.pst}
            }
        \end{pspicture}
        \caption{Preparing a~diagram admitting an~$\Omega3a$ move.}
        \label{fig:omega3prep}
    \end{figure}

    The $CI$ of the diagram differs by $1$ from the $CI$ of the diagram
    obtained after performing the $\Omega3$ move.
    It follows that any $L$-generating set contains 
    at least one move of type $\Of{3}$
    and one of type $\Ob{3}$.
\end{proof}

\begin{remark}
    The above consideration yields an alternative
    characterization of forward and backward $\Omega3$ moves.
    For an~$\Omega3$ move, one may complete its diagram 
    to a~move on a~link diagram
    such that the three strands of the move diagram
    belong to different components of that link.
    The change of $CI$ of the obtained diagrams 
    due to the move
    does not depend on the chosen completion
    as we have already shown,
    so $\Of{3}$ moves may be defined to be precisely 
    the ones that increase $CI$ of a~diagram obtained this way by $1$.
\end{remark}

The invariant $CI$ is zero for any $2$-component link diagram,
so one may still ask whether both forward and backward
$\Of{3}$ moves are needed for $2$-component link diagrams.
Therefore we proceed to introduce another diagram invariant
that distinguishes forward and backward $\Omega3$ moves.

\begin{definition}[half-index]
    Let $\gamma:S^1 \to \mb{R}^2$ be an~immersed curve
    and let $p \in \gamma(S^1)$
    be a~point which is not a~double point of $\gamma$.
    Then define $\mr{hInd}_\gamma(p)$ to be the mean
    of two numbers: the index of a~point to the left of $\gamma$ close to $p$ 
    and the index of a~point to the right of $\gamma$ close to $p$.
\end{definition}

\begin{figure}[h]
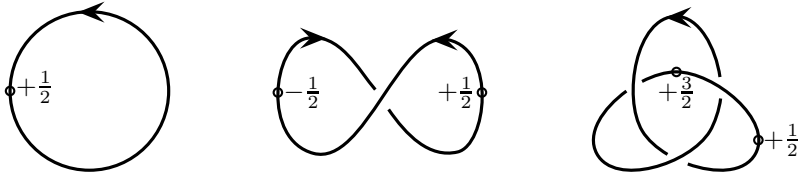

    \begin{subfigure}{0.3\textwidth}
        \centering
        \begin{pspicture}(150,160)
            \input{./unknot-0.pst}
            \pscircle(1.5,76.5){5}
            \rput(25,80){$+\frac12$}
        \end{pspicture}
    \end{subfigure}
    \begin{subfigure}{0.3\textwidth}
        \centering
        \begin{pspicture}(200,130)
            \input{./unknot-2.pst}
            \pscircle(1.5,60){5}
            \rput(25,60){$-\frac12$}
            \pscircle(194.5,60){5}
            \rput(170,60){$+\frac12$}
        \end{pspicture}

    \end{subfigure}
    \begin{subfigure}{0.3\textwidth}
        \centering
        \begin{pspicture}(160,160)
            \input{./trefoil-0.pst}
            \pscircle(157.5,30){5}
            \rput(180,30){$+\frac12$}
            \pscircle(80,94.5){5}
            \rput(80,75){$+\frac32$}
        \end{pspicture}

    \end{subfigure}

    \caption{Examples of half-indices of points with respect to 
    the underlying curves of knot diagrams.}
    \label{fig:halfindices-examples}
\end{figure}

\begin{definition}[overcrossing index]
    Let $D$ be a~diagram of a~link.
    For a~crossing $p \in \mc{C}_d(D)$, 
    we define its \emph{overcrossing index} as 
    $$OCI(p) = \sgn(p) \cdot \mr{hInd}_{\gamma_p^o}(p).$$
    Recall that $\gamma_p^o$ denotes 
    the component of the diagram that contains the overcrossing of $p$.

    Now define the \emph{overcrossing index} of $D$ to be
    $$OCI(D) =\ \sum_{\mathclap{p \in \mc{C}_d(D)}}\ 
    OCI(p).$$
\end{definition}

\begin{proposition}\label{prop:oci}
    The quantity $OCI$ in invariant under $\Omega1$ and $\Omega2$ moves,
    under $\Omega3$ moves involving $3$ strands of the same component
    and under $\Omega3$ moves involving $3$ strands of different components.

    It increases by $0$ or $1$ under an $\Of{3}$ move
    involving $2$ strands of one link component 
    and $1$ strand of another link component, 
    depending on which strands belong to the same component.
    Precisely, for such moves it increases by:
    \begin{itemize}
        \item $0$ when top and middle strands are of the same component,
        \item $0$ when middle and bottom strands are of the same component,
        \item $1$ when top and bottom strands are of the same component.
    \end{itemize}

    \begin{proof}
        As before, it suffices to check the values of $OCI(p)$ for the crossings
        in the changing discs of Reidemeister moves.

        Invariance under $\Omega1$, $\Omega2$ and 
        $\Omega3$ moves involving only one component of the link diagram
        follows from the same argument as given for $CI$.

        Invariance under $\Omega2$ moves involving
        different components of the diagram follows from
        the same argument as given for $CI$
        since both crossings involved in such move
        share the same overcrossing curve,
        and since they have opposite signs,
        their $OCI(p)$ cancel.

        $\Omega3$ moves involving strands of $3$ different components
        leave both signs and half-indices of corresponding crossings
        in the changing disc unchanged, so do not change $OCI(D)$.

        \begin{figure}[t]
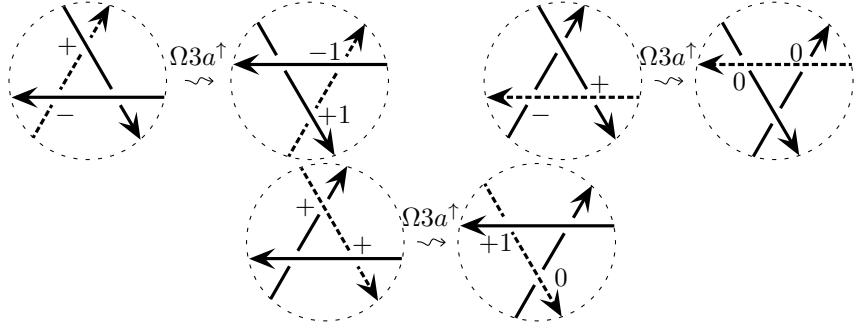


            \begin{subfigure}{0.49\textwidth} 
                \centering
                \begin{pspicture}(350,150)
                    \input{./R3a-1-b.pst}
                    \rput(56,108) {$+$}
                    \rput(54,42) {$-$}
                    \rput[Bl](180,75){
                        \input{./Rright.pst}
                    }
                    \rput[Bl](210,0) {
                        \input{./R3a-2-b.pst}
                        \rput(98,42) {$+1$}
                        \rput(89,102) {$-1$}
                    }
                    \rput[B](180,90){$\Of{3a}$}
                \end{pspicture}
            \end{subfigure}
            \begin{subfigure}{0.49\textwidth} 
                \centering
                \begin{pspicture}(350,150)
                    \input{./R3a-1-t.pst}
                    \rput(54,42) {$-$}
                    \rput(110,72) {$+$}
                    \rput[Bl](175,75){
                        \input{./Rright.pst}
                    }
                    \rput[Bl](200,0) {
                        \input{./R3a-2-t.pst}
                        \rput(96,102) {$0$}
                        \rput(42,74) {$0$}
                    }
                    \rput[B](175,90){$\Of{3a}$}
                \end{pspicture}
            \end{subfigure}

            \begin{subfigure}{0.99\textwidth} 
                \centering
                \begin{pspicture}(350,150)
                    \input{./R3a-1-m.pst}
                    \rput(56,108) {$+$}
                    \rput(110,72) {$+$}
                    \rput[Bl](175,75){
                        \input{./Rright.pst}
                    }
                    \rput[Bl](200,0) {
                        \input{./R3a-2-m.pst}
                        \rput(98,42) {$0$}
                        \rput(36,74) {$+1$}
                    }
                    \rput[B](175,90){$\Of{3a}$}
                \end{pspicture}
            \end{subfigure}
            \caption{Signs (to the left) and changes of $OCI(p)$ (to the right)
                for corresponding crossings of different components for an $\Of{3a}$
            move. The solid lines belong to one link component.}
            \label{fig:omega3a-oci}
        \end{figure}

        \begin{figure}[t]
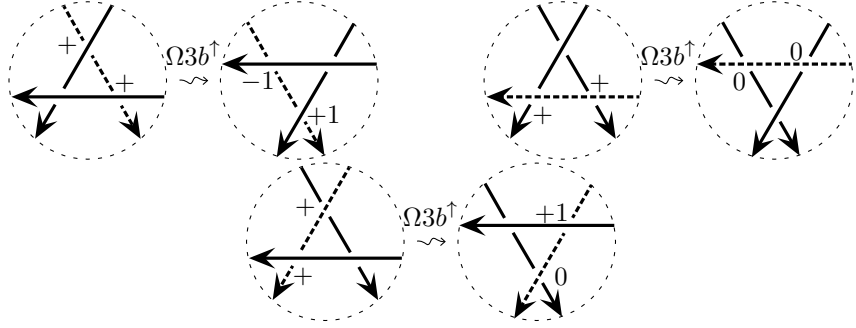

            \begin{subfigure}{0.49\textwidth} 
                \centering
                \begin{pspicture}(350,150)
                    \input{./R3b-1-b.pst}
                    \rput(56,108) {$+$}
                    \rput(110,72) {$+$}
                    \rput[Bl](175,75){
                        \input{./Rright.pst}
                    }
                    \rput[Bl](200,0) {
                        \input{./R3b-2-b.pst}
                        \rput(98,42) {$+1$}
                        \rput(36,74) {$-1$}
                    }
                    \rput[B](175,90){$\Of{3b}$}
                \end{pspicture}
            \end{subfigure}
            \begin{subfigure}{0.49\textwidth} 
                \centering
                \begin{pspicture}(350,150)
                    \input{./R3b-1-t.pst}
                    \rput(56,42) {$+$}
                    \rput(110,72) {$+$}
                    \rput[Bl](175,75){
                        \input{./Rright.pst}
                    }
                    \rput[Bl](200,0) {
                        \input{./R3b-2-t.pst}
                        \rput(42,74) {$0$}
                        \rput(96,102) {$0$}
                    }
                    \rput[B](175,90){$\Of{3b}$}
                \end{pspicture}
            \end{subfigure}

            \begin{subfigure}{\textwidth} 
                \centering
                \begin{pspicture}(350,150)
                    \input{./R3b-1-m.pst}
                    \rput(56,108) {$+$}
                    \rput(54,42) {$+$}
                    \rput[Bl](175,75){
                        \input{./Rright.pst}
                    }
                    \rput[Bl](200,0) {
                        \input{./R3b-2-m.pst}
                        \rput(98,42) {$0$}
                        \rput(89,102) {$+1$}
                    }
                    \rput[B](175,90){$\Of{3b}$}
                \end{pspicture}
            \end{subfigure}
            \caption{Signs (to the left) and changes of $OCI(p)$ (to the right)
                for corresponding crossings of different components for an $\Of{3b}$
            move. The solid lines belong to one link component.}
            \label{fig:omega3b-oci}
        \end{figure}

        An $\Omega3$ move involving strands of $2$ different components
        have $2$ crossings between different components
        in their changing discs. 
        Consider one of these crossings, $p$.
        After the $\Of{3}$ move, 
        the half-index of $p$
        with respect to $\gamma_p^o$
        stays unchanged
        if the strand $S$ not passing through $p$
        belongs to 
        $\gamma_p^u$.
        Otherwise it
        increases by $1$ if the move shifts $p$ 
        from the right to the left of strand $S$,
        or decreases by $1$ if the move shifts $p$ 
        from the left to the right of strand $S$.
        Then, to calculate the change of $OCI(p)$
        we multiply the change of this half-index
        by the sign of the crossing.

        For such $\Omega3$ move, three cases are to be considered:
        \begin{enumerate}[(a)]
            \item top and middle strands are in the same component,
            \item middle and bottom strands are in the same component,
            \item top and bottom strands are in the same component.
        \end{enumerate}

        Figure \ref{fig:omega3a-oci} summarizes signs (to the left)
        and changes of $OCI(p)$
        (to the right) in these cases for an $\Of{3a}$ move.
        Figure \ref{fig:omega3b-oci} gives the same information
        for an $\Of{3b}$ move.

        \begin{figure}[t]
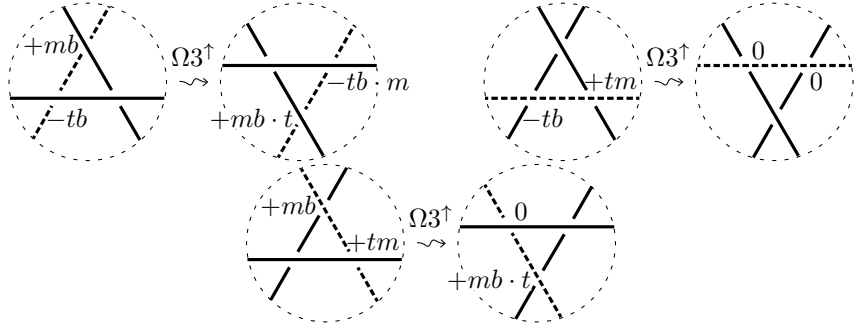

            \begin{subfigure}{0.49\textwidth} 
                \centering
                \begin{pspicture}(350,150)
                    \input{./R3x-1-b.pst}
                    \rput(40,110) {$+mb$}
                    \rput(55,40) {$-tb$}
                    \rput[Bl](175,75){
                        \input{./Rright.pst}
                    }
                    \rput[Bl](200,0) {
                        \input{./R3x-2-b.pst}
                        \psframe*[linecolor=white](0,30)(70,50)
                        \rput(30,40) {$+mb\cdot t$}
                        \psframe*[linecolor=white](110,65)(170,85)
                        \rput(140,75) {$-tb \cdot m$}
                    }
                    \rput[B](175,90){$\Of{3}$}
                \end{pspicture}
            \end{subfigure}
            \begin{subfigure}{0.49\textwidth} 
                \centering
                \begin{pspicture}(350,150)
                    \input{./R3x-1-t.pst}
                    \rput(55,40) {$-tb$}
                    \rput(120,75) {$+tm$}
                    \rput[Bl](175,75){
                        \input{./Rright.pst}
                    }
                    \rput[Bl](200,0) {
                        \input{./R3x-2-t.pst}
                        \rput(115,75) {$0$}
                        \rput(60,105) {$0$}
                    }
                    \rput[B](175,90){$\Of{3}$}
                \end{pspicture}
            \end{subfigure}

            \begin{subfigure}{\textwidth} 
                \centering
                \begin{pspicture}(350,150)
                    \input{./R3x-1-m.pst}
                    \rput(40,110) {$+mb$}
                    \rput(120,75) {$+tm$}
                    \rput[Bl](175,75){
                        \input{./Rright.pst}
                    }
                    \rput[Bl](200,0) {
                        \input{./R3x-2-m.pst}
                        \psframe*[linecolor=white](0,30)(70,50)
                        \rput(30,40) {$+mb\cdot t$}
                        \rput(60,105) {$0$}
                    }
                    \rput[B](175,90){$\Of{3}$}
                \end{pspicture}
            \end{subfigure}
            \caption{Signs (to the left) and changes of $OCI(p)$ (to the right)
                for corresponding crossings of different components for
                $\Of{3a}$, $\Of{3d}$, $\Of{3e}$, $\Of{3g}$
            moves. The solid lines belong to one link component.}
            \label{fig:omega3adeg-oci}
        \end{figure}

        \begin{figure}[t]
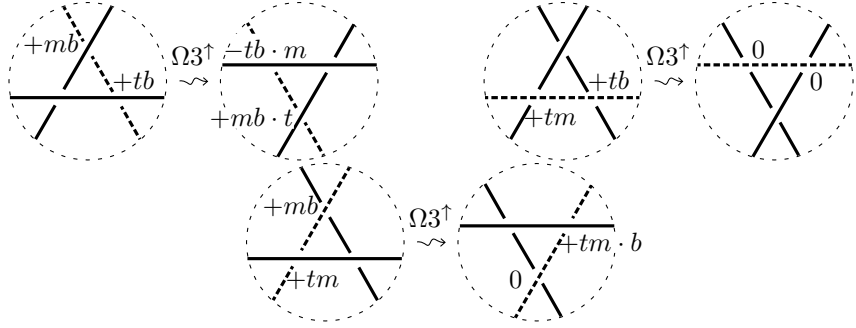

            \begin{subfigure}{0.49\textwidth} 
                \centering
                \begin{pspicture}(350,150)
                    \input{./R3y-1-b.pst}
                    \rput(40,110) {$+mb$}
                    \rput(118,75) {$+tb$}
                    \rput[Bl](175,75){
                        \input{./Rright.pst}
                    }
                    \rput[Bl](200,0) {
                        \input{./R3y-2-b.pst}
                        \psframe*[linecolor=white](0,30)(70,50)
                        \rput(30,40) {$+mb\cdot t$}
                        \psframe*[linecolor=white](13,95)(83,120)
                        \rput(43,105) {$-tb\cdot m$}
                    }
                    \rput[B](175,90){$\Of{3}$}
                \end{pspicture}
            \end{subfigure}
            \begin{subfigure}{0.49\textwidth} 
                \centering
                \begin{pspicture}(350,150)
                    \input{./R3y-1-t.pst}
                    \rput(63,40) {$+tm$}
                    \rput(118,75) {$+tb$}
                    \rput[Bl](175,75){
                        \input{./Rright.pst}
                    }
                    \rput[Bl](200,0) {
                        \input{./R3y-2-t.pst}
                        \rput(60,105) {$0$}
                        \rput(115,75) {$0$}
                    }
                    \rput[B](175,90){$\Of{3}$}
                \end{pspicture}
            \end{subfigure}

            \begin{subfigure}{\textwidth} 
                \centering
                \begin{pspicture}(350,150)
                    \input{./R3y-1-m.pst}
                    \rput(42,110) {$+mb$}
                    \rput(63,40) {$+tm$}
                    \rput[Bl](175,75){
                        \input{./Rright.pst}
                    }
                    \rput[Bl](200,0) {
                        \input{./R3y-2-m.pst}
                        \rput(55,40) {$0$}
                        \psframe*[linecolor=white](110,65)(170,85)
                        \rput(135,75) {$+tm\cdot b$}
                    }
                    \rput[B](175,90){$\Of{3}$}
                \end{pspicture}
            \end{subfigure}
            \caption{Signs (to the left) and changes of $OCI(p)$ (to the right)
                for corresponding crossings of different components for 
                $\Of{3b}$, $\Of{3c}$, $\Of{3f}$, $\Of{3h}$
            moves. The solid lines belong to one link component.}
            \label{fig:omega3bcfh-oci}
        \end{figure}

        Now, take any $\Of{3}$ move from $\Of{3a}$, $\Of{3d}$,
        $\Of{3e}$ and $\Of{3g}$. 
        The diagrams of these moves differ only by orientations
        of strands.
        Let $t$ (resp. $m$, $b$) be equal to $+1$ if the orientation
        of top (resp. middle, bottom) strand coincides 
        with the orientation of top (resp. middle, bottom) strand
        for an $\Of{3a}$ move and $-1$ otherwise.
        Figure \ref{fig:omega3adeg-oci} summarizes signs (to the left)
        and changes of $OCI(p)$
        (to the right) for the three cases of
        a~move of type $\Of{3a}$, $\Of{3d}$, $\Of{3e}$ or $\Of{3g}$.
        Analogous information is contained in Figure \ref{fig:omega3bcfh-oci}
        for moves of type $\Of{3b}$, $\Of{3c}$, $\Of{3f}$, $\Of{3h}$
        ($t, m, b$ depend of the orientations of strands
        relative to the $\Of{3b}$ move).

        Summing changes of $OCI(p)$ for crossings of these diagrams,
        it follows that in the first two cases $OCI(D)$ remains unchanged.
        In the third case, when the top and bottom strands belong to one component,
        $OCI(D)$ changes by $t\cdot m\cdot b$. 
        Checking values of $t, m, b$ for all $\Of{3}$ moves
        it follows that $tmb = 1$ for any $\Of{3}$ move.
        Indeed, each of the diagrams of moves
        $\Of{3d}, \Of{3e}, \Of{3g}$ has exactly two strands
        with orientations opposite to orientations of corresponding
        strands in $\Of{3a}$ move,
        and similar conclusion applies for $\Of{3c}, \Of{3f}, \Of{3h}$
        with respect to $\Of{3b}$.
    \end{proof}
\end{proposition}

\begin{proof}[Proof of Theorem \ref{thm:omega3}.]

    If a~link $L$ has at least $2$ components,
    a~suitable sequence of Reidemeister moves
    leads to a~diagram $D$,
    part of which looks like the left diagram
    of Figure \ref{fig:omega3prep}, 
    with the bottom and the left strand belonging to the same component
    and the strand to the right belonging to another component.
    By conducting three moves of type $\Omega2$
    as in Figure \ref{fig:omega3prep}
    we obtain a~diagram admitting
    an~$\Of{3a}$ move that increases $OCI(D)$ by $1$.
\end{proof}

\begin{remark}
    In a~similar way one can define the \emph{undercrossing index} $UCI$
    of a~diagram.
    Repeating the steps of the proof of Proposition \ref{prop:oci}
    one can show that $UCI$ changes exactly in the same way as $OCI$,
    so the difference $OCI - UCI$ is a~link invariant.
    One can directly check that the difference
    is invariant under changes of crossings,
    and is zero on an~unknot diagram.
    It follows that $OCI = UCI$.
\end{remark}

\begin{remark}
    Hayashi, Hayashi and Nowik constructed in \cite{HHN12}
    a~family of unlink diagrams $D_n$ and proved that
    the number of moves needed to separate both components of $D_n$
    is greater or equal to $(n^2+14n-13)/16$,
    and the number of moves needed to obtain a~diagram without crossings
    from $D_n$ is greater or equal to $(n^2+10n-13)/4$.
    But $OCI(D_n) = -n^2/4$ for $n$ even and
    $OCI(D_n) = -(n^2-1)/4$ for $n$ odd,
    so it follows that one needs at least $(n^2-1)/4$ moves
    (of very specific type,
    as described in Proposition \ref{prop:oci})
    to separate components of $D_n$.
\end{remark}

\section{Polyak moves}
\label{sec:Polyak}

\subsection{$\Omega2$ moves}
\label{subsec:Polyak-2}

\begin{proof}[Proof of Theorem~\ref{thm:polyak2moves}]
    Notice moves of type $\Omega1a$ and $\Omega1b$ do not change the number
    of negative crossings, $n_-$.
    This quantity is invariant under $\Omega3$ moves, too.

    On the other hand, $\Of{2a}$ increases $n_-$ by $1$,
    and $\Ob{2a}$ decreases $n_-$ by $1$.
    Therefore, having two diagrams $D_1, D_2$ of a~knot $K$,
    $D_2$ being obtained from $D_1$ by an~$\Of{2a}$ move,
    we have $n_-(D_2) - n_-(D_1) = 1$, so
    one cannot get $D_2$ from $D_1$ using directed Polyak moves without
    $\Of{2a}$ and one cannot get $D_1$ from $D_2$ using directed Polyak moves
    without $\Ob{2a}$.
\end{proof}

\subsection{Ascending and descending $\Omega3$ moves}
\label{subsec:Polyak-ad}

We recall the definition of a~diagram invariant
introduced by Hass and Nowik in \cite{HN08}.
Let $D$ be a~knot diagram and $p$ one of its crossings.
Denote by $D_p$ the link diagram obtained by smoothing
the crossing $p$ as shown in Figure~\ref{fig:smoothing}.
Let $\mc{C}_+(D)$ (resp. $\mc{C}_-(D)$) 
be the set of all positive (resp. negative) crossings of $D$.
\begin{figure}[h]
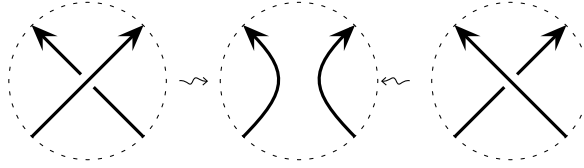

    \centering
    \begin{pspicture}(550,150)
        \input{./positive-crossing.pst}
        \rput[Bl](175,75){
            \input{./Rright.pst}
        }
        \rput[Bl](200,0){
            \input{./smooth-crossing.pst}
        }
        \rput[Br](375,75){
            \psscalebox{-1 1}{\input{./Rright.pst}}

        }
        \rput[Bl](400,0){
            \input{./negative-crossing.pst}
        }
    \end{pspicture}
    \caption{Smoothing positive and negative crossings.}
    \label{fig:smoothing}
\end{figure}

\begin{definition}\label{def:HNinvariant}
    Let $\phi$ be a~two-component link invariant
    with values in a~set $S$.
    Define a~diagram invariant
    \begin{equation}\label{eq:HNinvariant}
        I_\phi(D) =\ \sum_{\mathclap{p \in \mc{C}_+(D)}}\ X_{\phi(D_p)}
        +\ \sum_{\mathclap{p \in \mc{C}_-(D)}}\ Y_{\phi(D_p)},
    \end{equation}
    with values in 
    $\displaystyle G(S) = \bigoplus_{s \in S} (\mb{N} X_s \oplus \mb{N} Y_s)$,
    where we consider $X_s, Y_s$ to be formal variables 
    representing generators of $\displaystyle \bigoplus_{s \in S} \mb{N}^2$.
\end{definition}

We will call it the \emph{Hass--Nowik invariant}.
In their paper \cite{HN08} Hass and Nowik calculated how this invariant,
taken with $\phi = \lk$ (the linking number),
changes with respect to Reidemeister moves.

For moves we are interested in, changes of the invariant 
are summarized in the table below (following \cite{HN08}):
\begin{table}[H]
    \centering
    \begin{tabular}{|l|c|}
        \hline
        Move & Change \\
        \hline
        $\Of{1a}$ & $X_0$ \\
        $\Of{1b}$ & $X_0$ \\
        $\Of{2a}$ & $X_n + Y_{n+1}$ \\
        $\Of{3a}$ & $\pm (Y_n - Y_{n-1})$ \\
        \hline
    \end{tabular}
    \caption{Changes of $I_{\lk}$ with respect to Polyak moves.}
    \label{tab:HN-P-change}
\end{table}
Here both $n$ and $+$ or $-$ sign for $\pm$ depend on 
the part of the diagram outside the changing disc.

\begin{definition}\label{def:HNP}
    Denote by $HNP$ the diagram invariant defined as
    a~composition of $I_{\lk}$ and 
    a~semigroup homomorphism 
    $\bigoplus_{n \in \mb{Z}} (\mb{N} X_n \oplus \mb{N} Y_n) \to \mb{Z}$
    mapping $X_n \mapsto -n$, $Y_n \mapsto n-1$.
    More explicitly,
    \begin{equation}\label{eq:hnp}
        HNP(D) =\ \sum_{\mathclap{C \in \mc{C}_+(D)}}\ \lk(D_C)
        -\ \sum_{\mathclap{C \in \mc{C}_-(D)}}\ (\lk(D_C)-1)
    \end{equation}
\end{definition}

Considering the changes of $I_{\lk}$ under Polyak moves
as written in Table~\ref{tab:HN-P-change},
we notice that $HNP$ is invariant under 
$\Omega1a, \Omega1b$ and $\Omega2a$ moves
and changes by $\pm 1$ under $\Omega3a$ moves.
Carefully investigating the change of $I_{\lk}$ under $\Omega3a$ moves
we can distinguish between two different situations.

\begin{definition}[ascending and descending moves]
    \label{def:asc-desc}
    We will call an $\Omega3$ move on an oriented knot diagram 
    to be
    \emph{ascending} (resp. \emph{descending}),
    if the order of three strands involved in the move
    when traversing the knot, in the direction of orientation,
    is from bottom to top (resp. top to bottom),
    as shown (schematically) in Figure~\ref{fig:ascending} 
    (resp. Figure~\ref{fig:descending}).
\end{definition}

\begin{figure}[h]
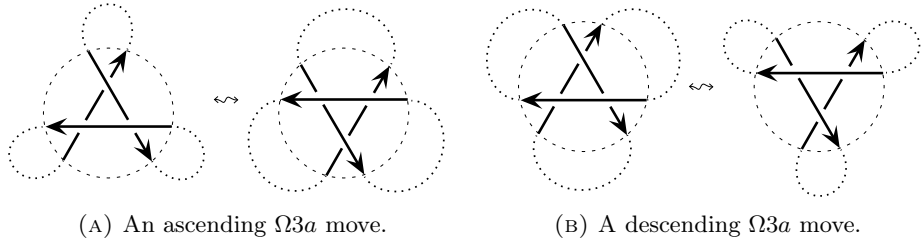

    \psunits{0.33pt}
    \begin{subfigure}{0.49\textwidth}
        \centering
        \begin{pspicture}(540,220)

            \rput[Bl](50,20) { \input{./R3a-1-ascending.pst} }
            \rput[Bl](260,110) { \input{./Rchange.pst} }

            \rput[Bl](320,20) { \input{./R3a-2-ascending.pst} }
        \end{pspicture}
        \caption{An~ascending $\Omega3a$ move.}
        \label{fig:ascending}
    \end{subfigure}
    \begin{subfigure}{0.49\textwidth}
        \centering
        \begin{pspicture}(540,220)
            \rput[Bl](50,50) { \input{./R3a-1-descending.pst} }
            \rput[Bl](260,125) { \input{./Rchange.pst} }
            \rput[Bl](320,50) { \input{./R3a-2-descending.pst} }
        \end{pspicture}
        \caption{A~descending $\Omega3a$ move.}
        \label{fig:descending}
    \end{subfigure}
    \caption{Ascending and descending $\Omega3a$ moves.}
\end{figure}

\begin{remark}
    \"Ostlund \cite{O01} calls
    forward ascending and backward descending $\Omega3$ moves 
    \emph{positive},
    and forward descending and backward ascending
    $\Omega3$ moves
    \emph{negative}.
\end{remark}

We denote an ascending or a~descending move by adding an appropriate
subscript to the move name, e.g. $\Of{3a}_{a}$ for
an ascending $\Of{3a}$ move or $\Of{3a}_{d}$ for a descending one.

\begin{proposition}\label{prop:HN-changes}
    $I_{\lk}$ changes by $Y_n-Y_{n-1}$ under an~$\Of{3a}_{a}$ move
    and by $-Y_n + Y_{n-1}$ under an~$\Of{3a}_{d}$ move,
    for some $n \in \mb{Z}$.
    \begin{proof}
        If we smooth a~diagram $D$ at crossing $p$,
        then the value of any link invariant on the smoothing
        does not depend on Reidemeister moves performed on
        the smoothed diagram $D_p$.
        What follows is that performing any Reidemeister move
        on a~knot diagram $D$ does not change either signs $\sgn(p)$
        or values of $\phi(D_p)$ for any crossing $p$ outside
        of the changing disc of this Reidemeister move.
        Therefore, in order to calculate the change of $I_{\lk}$,
        it suffices to check the values of $\phi$
        on diagrams obtained by smoothing the crossings
        involved in the move.

        An $\Of{3a}$ move does not create or cancel crossings,
        or change signs of any crossings,
        but moves them in a~particular way,
        giving a~correspondence between crossings before
        and after performing the move,
        as depicted in Figure~\ref{fig:correspondence}.
        We will distinguish these three crossings 
        by strands that pass through them: 
        top and middle, middle and bottom, or bottom and top.

        Smoothing the crossing of top and middle strand
        we obtain isotopic links before and after the $\Of{3a}$ move
        (as seen in Figure~\ref{fig:3a-topmid}).
        The same is true for the crossing of middle and bottom strand
        (Figure~\ref{fig:3a-midbot}).
        The situation is different when considering 
        top and bottom strands' crossing. 
        Smoothing before and after the $\Of{3a}$ move
        we obtain two distinct links.

        \begin{figure}[h]
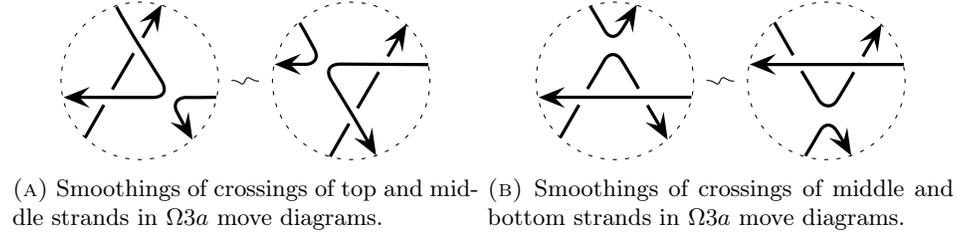

            \begin{subfigure}{0.49\textwidth}
                \centering
                \begin{pspicture}(350,150)
                    \input{./R3a-1-topmid.pst}
                    \rput[Bl](175,75) {
                        \input{./Rsim.pst}
                    }
                    \rput[Bl](200,0) {
                        \input{./R3a-2-topmid.pst}
                    }
                \end{pspicture}
                \caption{Smoothings of crossings of top and middle strands
                in $\Omega3a$ move diagrams.}
                \label{fig:3a-topmid}
            \end{subfigure}
            \begin{subfigure}{0.49\textwidth}
                \centering
                \begin{pspicture}(350,150)
                    \input{./R3a-1-midbot.pst}
                    \rput[Bl](175,75) {
                        \input{./Rsim.pst}
                    }
                    \rput[Bl](200,0) {
                        \input{./R3a-2-midbot.pst}
                    }
                \end{pspicture}
                \caption{Smoothings of crossings of middle and bottom strands
                in $\Omega3a$ move diagrams.}
                \label{fig:3a-midbot}
            \end{subfigure}
            \caption{Isotopic smoothings of corresponding crossings 
            taking part in an $\Omega3a$ move.}
        \end{figure}

        For an ascending move, the middle (straight) strand
        and the upper-right strand of the smoothing
        (as seen in Figure~\ref{fig:3a-ascending-topbot})
        belong to the same component and the lower-left strand
        belongs to the other component.
        The linking number of the smoothing,
        which is equal to some number $n$,
        increases by $1$ since the two other crossings are positive
        and while before the move (and after smoothing) 
        these were crossings between strands of one of the components, 
        after the move they become crossings between different components
        of the link diagram.
        The crossing of the top and bottom strand contributes
        $Y_n$ to $I_{\lk}$ before the move and $Y_{n+1}$ after the move.
        This, up to a~shift of $n$ by $1$, 
        proves the first part of the proposition.

        \begin{figure}[h]
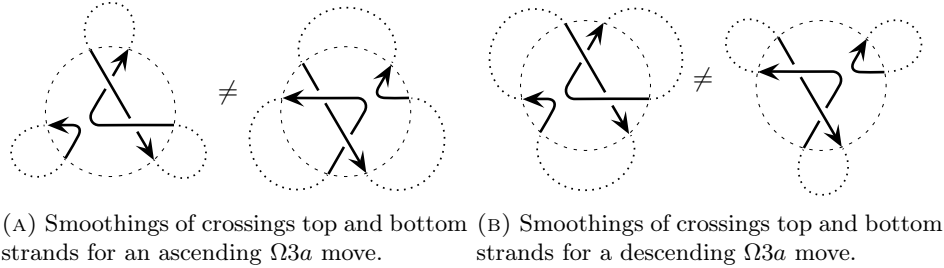

            \psunits{0.33pt}
            \begin{subfigure}{0.49\textwidth}
                \centering
                \begin{pspicture}(540,220)

                    \rput[Bl](50,20) { \input{./R3a-1-asc-topbot.pst} }
                    \rput[B](260,110) {$\neq$}

                    \rput[Bl](320,20) { \input{./R3a-2-asc-topbot.pst} }
                \end{pspicture}
                \caption{Smoothings of crossings top and bottom strands 
                for an ascending $\Omega3a$ move.}
                \label{fig:3a-ascending-topbot}
            \end{subfigure}
            \begin{subfigure}{0.49\textwidth}
                \centering
                \begin{pspicture}(540,220)
                    \rput[Bl](50,50) { \input{./R3a-1-desc-topbot.pst} }
                    \rput[B](260,125) {$\neq$}
                    \rput[Bl](320,50) { \input{./R3a-2-desc-topbot.pst} }
                \end{pspicture}
                \caption{Smoothings of crossings top and bottom strands 
                for a~descending $\Omega3a$ move.}
                \label{fig:3a-descending-topbot}
            \end{subfigure}

            \caption{Nonisotopic smoothings of corresponding crossings
            taking part in an $\Omega3a$ move.}
            \label{fig:3a-topbot}
        \end{figure}

        For a~descending move, the middle strand and the lower-left strand
        of the smoothing
        belong to one link component and the upper-right strand 
        to the other component (Figure~\ref{fig:3a-descending-topbot}).
        Similarly, in this case $2$ positive crossings 
        between these components become crossings between strands 
        of the same link component. 
        Therefore in this case the linking number of this smoothing 
        decreases after performing an $\Of{3a}$ move.
        Before this move the top and bottom strands' crossing contributes
        $Y_n$ to $I_{\lk}$ and after the move 
        it contributes $Y_{n-1}$ to $I_{\lk}$, and the proposition follows.
    \end{proof}
\end{proposition}

\begin{corollary}
    The quantity $HNP$ increases by $1$ under an $\Of{3a}_{a}$ move,
    decreases by $1$ under an $\Of{3a}_{d}$ move,
    and is invariant with respect to $\Omega1a$, $\Omega1b$ and $\Omega2a$ 
    moves.
    \begin{proof}
        It follows from evaluating changes of $I_{\lk}$ given in
        Proposition~\ref{prop:HN-changes} and in Table~\ref{tab:HN-P-change}
        via map $X_n \mapsto -n$ and $Y_n \mapsto n-1$.
    \end{proof}
\end{corollary}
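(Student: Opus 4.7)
The plan is a direct and essentially mechanical computation. Since $HNP$ is by construction the composition of $I_{\lk}$ with the semigroup homomorphism $\phi\colon X_n \mapsto -n$, $Y_n \mapsto n-1$, the change of $HNP$ under any move is simply $\phi$ applied to the corresponding change of $I_{\lk}$, and those changes have already been recorded in Table~\ref{tab:HN-P-change} and in Proposition~\ref{prop:HN-changes}.

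First I would dispose of the three invariance claims. For $\Omega1a$ and $\Omega1b$, the change of $I_{\lk}$ is $X_0$, and $\phi(X_0) = 0$. For $\Of{2a}$, the change is $X_n + Y_{n+1}$, and
$$\phi(X_n + Y_{n+1}) = -n + ((n+1)-1) = 0,$$
so the value of $n$ — which depends on the part of the diagram outside the changing disc — cancels. Invariance under the corresponding backward moves $\Ob{1a}$, $\Ob{1b}$ and $\Ob{2a}$ is automatic, since reversing a move negates the change of $I_{\lk}$ and hence negates $0$.

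Next I would treat the two $\Omega3a$ cases using Proposition~\ref{prop:HN-changes}, which sharpens the $\pm(Y_n - Y_{n-1})$ entry of Table~\ref{tab:HN-P-change} by matching the sign to whether the move is ascending or descending. For an $\Of{3a}_a$ move the change of $I_{\lk}$ is $Y_n - Y_{n-1}$, so
$$\phi(Y_n - Y_{n-1}) = (n-1) - ((n-1)-1) = 1;$$
for an $\Of{3a}_d$ move the change is $-(Y_n - Y_{n-1})$, which $\phi$ sends to $-1$. In both cases the result is independent of $n$, as required. There is essentially no obstacle here; the only point worth flagging is that although $G(S)$ is only a commutative monoid, the differences appearing in the change formulas make sense in its Grothendieck group, and $\phi$ extends uniquely to a group homomorphism to $\mb{Z}$, so the arithmetic above is well-defined.
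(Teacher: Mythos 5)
Your computation is exactly the paper's proof, carried out explicitly: the paper likewise just evaluates the changes from Table~\ref{tab:HN-P-change} and Proposition~\ref{prop:HN-changes} under the map $X_n \mapsto -n$, $Y_n \mapsto n-1$. The proposal is correct, and the extra remark about extending the homomorphism to the Grothendieck group is a reasonable (if unstated in the paper) bit of bookkeeping.
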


This gives a~partial answer to our problem:
\begin{corollary}\label{condition-HN}
    Any knot-generating subset of
    $$\{\Of{1a},\Ob{1a},\Of{1b},\Ob{1b},\Of{2a},\Ob{2a},
    \Of{3a}_{a},\Ob{3a}_{a},\Of{3a}_{d},\Ob{3a}_{d}\}$$
    (i.e. directed Polyak moves with distinct ascending and descending moves)
    contains at least one move from the set
    $\{\Of{3a}_{a},\Ob{3a}_{d}\}$
    and one move from the set
    $\{\Ob{3a}_{a},\Of{3a}_{d}\}$.
\end{corollary}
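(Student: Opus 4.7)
The plan is to reduce the statement to the $HNP$-invariant computations from the previous corollary, together with an easy production of two diagrams of the same knot whose $HNP$-values differ. First I would extend the $HNP$-change table to all ten moves listed. The forward entries $\Of{1a}, \Of{1b}, \Of{2a}, \Of{3a}_a, \Of{3a}_d$ have already been computed, and the backward moves $\Ob{1a}, \Ob{1b}, \Ob{2a}$ inherit $HNP$-invariance from their forward counterparts. For the $\Omega3a$ part I would observe that whether a move is ascending or descending depends only on the cyclic order in which the knot's orientation visits the three strands in the changing disc, and the labels bottom/middle/top of the strands are preserved by the move; hence this datum is intrinsic to the diagrammatic configuration and is insensitive to reading the move left-to-right versus right-to-left. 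Consequently $\Ob{3a}_a$ decreases $HNP$ by $1$ and $\Ob{3a}_d$ increases it by $1$. In summary, the moves in $\{\Of{3a}_a, \Ob{3a}_d\}$ each increase $HNP$ by $1$, the moves in $\{\Of{3a}_d, \Ob{3a}_a\}$ each decrease it by $1$, and all other listed moves preserve $HNP$.

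Next, for a given knot $K$, I would exhibit two diagrams $D_1, D_2$ of $K$ with $HNP(D_2) = HNP(D_1)+1$. Starting from any diagram of $K$, a short sequence of Reidemeister moves arranges three arcs into an ascending $\Omega3a$ configuration inside a disc, producing a diagram $D_1$. Letting $D_2$ be the result of performing $\Of{3a}_a$ on that disc, both $D_1$ and $D_2$ represent $K$, and by the table above $HNP(D_2) - HNP(D_1) = 1$.

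With these two ingredients in place the corollary is immediate. Let $A$ be any $K$-generating subset of the listed ten moves, and choose a sequence in $A$ taking $D_1$ to $D_2$. Summing the $HNP$-changes along this sequence yields $+1$; since each move in $A$ changes $HNP$ by $-1$, $0$, or $+1$, the sequence must contain at least one element of $\{\Of{3a}_a, \Ob{3a}_d\}$. Applying the same argument to a sequence in $A$ transforming $D_2$ back to $D_1$ forces $A$ to contain a move from $\{\Of{3a}_d, \Ob{3a}_a\}$, completing the proof.

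The only point requiring real care is the invariance of the ascending/descending classification under move reversal, since this is what promotes the forward $HNP$-change table to a full table covering backward $\Omega3a$ moves. This is the main technical content of the argument, and is settled by unpacking the definition of ascending/descending in Definition~\ref{def:asc-desc}; the remainder is bookkeeping of changes of $HNP$ along a Reidemeister sequence.
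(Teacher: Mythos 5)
Your proposal is correct and follows essentially the same route as the paper: the corollary is stated there as an immediate consequence of the preceding corollary computing the changes of $HNP$ under the listed moves, and your argument simply fills in the (implicit) details — that backward moves negate the $HNP$-change of their forward counterparts while preserving the ascending/descending label, and that a pair of diagrams of $K$ differing by a single $\Of{3a}_{a}$ move realizes a net change of $+1$. Nothing in your write-up deviates from or adds a gap to the paper's intended argument.
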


The terms \emph{ascending} and \emph{descending} with regard to $\Omega3$ moves
are taken from the work of \"{O}stlund \cite{O01}.
In his paper, \"{O}stlund defines three families of knot diagram invariants,
namely $A_n, D_n$ for $n \geq 4$ and $W_n$ for $n \geq 3$ and $n$ odd.

He proves that
\begin{proposition}[\cite{O01}]
    $A_n$, $D_n$ and $W_n$ are invariant with respect to 
    $\Omega1$ and $\Omega2$ moves.
    Moreover, $A_n$ is invariant with respect to descending $\Omega3$ moves
    and $D_n$ is invariant with respect to ascending $\Omega3$ moves.
\end{proposition}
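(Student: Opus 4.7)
The plan is to exploit that $A_n$, $D_n$, $W_n$ are all of Polyak--Viro type, i.e., expressible as a signed count, over the Gauss diagram $G(D)$ of $D$, of subdiagrams realizing a fixed arrow configuration. So my first step is to fix the Gauss-diagram description of each family from \cite{O01}: $W_n$ counts configurations of $n$ pairwise linked arrows read in cyclic order, while $A_n$ (resp.\ $D_n$) counts configurations where the $n$ arrows are linked in a prescribed ascending (resp.\ descending) order of their over/under strands along the orientation. Once this is in place, the invariance statements become local checks on what an oriented Reidemeister move does to the Gauss diagram.

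Next I would dispose of $\Omega1$ and $\Omega2$. An $\Omega1$ move changes $G(D)$ only by adding or removing an \emph{isolated} arrow (no other arrow has a foot on the little arc carrying the kink). Since every configuration counted by $A_n$, $D_n$, $W_n$ requires the arrows to be pairwise linked in a nontrivial way, no counted configuration can involve this isolated arrow, hence those sums are unchanged. An $\Omega2$ move adds (or removes) two arrows with adjacent feet and opposite signs; any configuration that uses exactly one of the two new arrows is paired with one using the other, and their contributions cancel because the signs are opposite and the linking pattern with the rest of $G(D)$ is the same. Configurations that use both new arrows are impossible because the two arrows are not linked with each other. This handles $\Omega1$ and $\Omega2$ for all three families uniformly.

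The main obstacle is the $\Omega3$ case, which is where the ascending/descending asymmetry enters. The effect of an $\Omega3$ move on $G(D)$ is to reorder the six endpoints of the three arrows involved in the vanishing triangle; arrow signs and linking with arrows outside the triangle are unchanged. The plan is to enumerate, for each of the eight oriented $\Omega3$ types, the relative cyclic order of the six endpoints before and after the move, and to observe that this reordering coincides precisely with the \emph{ascending} rearrangement pattern used to define $A_n$ (resp.\ the \emph{descending} pattern for $D_n$). Consequently, for a descending $\Omega3$ move, the three arrows of the triangle are arranged in $G(D)$ in a way that is never counted by $A_n$'s defining configuration, so $A_n$ is unchanged; the analogous statement, with ascending and descending swapped, gives $\Omega3$-invariance of $D_n$. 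The concrete obstacle here is the bookkeeping: one must check that the only triples of arrows whose linking pattern changes are those containing all three triangle arrows, and that for such triples the defining configuration of $A_n$ (resp.\ $D_n$) is never realized by a descending (resp.\ ascending) move regardless of the signs and of how many other arrows link the triangle arrows.

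Finally, $W_n$ (defined only for odd $n \ge 3$) is invariant under \emph{all} $\Omega3$ moves; this follows from the same local analysis, but using the parity of $n$: the sign of each Polyak--Viro term for $W_n$ reverses under the triangle reordering, and the contributions from the ``before'' and ``after'' triples pair up and cancel by an involution on the counted configurations, the parity of $n$ ensuring no fixed points. With $\Omega1$, $\Omega2$, $\Omega3$ all verified, the proposition follows.
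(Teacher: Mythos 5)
This proposition is quoted from \"Ostlund's paper \cite{O01}; the present paper neither defines $A_n$, $D_n$, $W_n$ nor proves the statement, so there is no in-paper proof to compare against. Judged on its own, your attempt has two genuine gaps. First, the entire argument rests on definitions you have invented: you posit that $A_n$, $D_n$, $W_n$ are Polyak--Viro counts of ``$n$ pairwise linked arrows'' in a prescribed ascending or descending order, but nothing in the paper (or in your write-up) establishes that these are \"Ostlund's invariants. A proof of a statement about specific invariants cannot begin by guessing what they are; until the defining formulas are pinned down and shown to agree with \cite{O01}, every subsequent step is conditional on an unverified premise.

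Second, even granting your definitions, the $\Omega3$ step fails as described. You reduce the problem to checking that ``the only triples of arrows whose linking pattern changes are those containing all three triangle arrows.'' This is false: an $\Omega3$ move reverses the order of the two crossing preimages on each of the three strands, and this already changes the mutual linking of \emph{pairs} of the three chords (e.g.\ two chords sharing the top strand can pass from linked to unlinked). Consequently, configurations containing exactly two of the three triangle arrows together with $n-2$ outside arrows do change their contribution, and your argument --- which only addresses configurations containing all three --- does not account for them. This is precisely the hard part of any Gauss-diagram invariance proof for $\Omega3$, and it is where the ascending/descending asymmetry must actually be extracted; asserting it away leaves the core of the proposition unproved. (As a minor point, your final paragraph proves full $\Omega3$-invariance of $W_n$, which is not claimed in the proposition and would make $W_n$ a knot invariant rather than merely a diagram invariant; that claim would need separate justification.)
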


Then he considers the figure eight knot diagram and its inverse 

, showing that both $A_4$ and $D_4$
take different values on these two diagrams, and deduces that
\begin{theorem}
    Figure eight knot diagram cannot be transformed into its inverse
    without the use of both ascending and descending $\Omega3$ moves.
\end{theorem}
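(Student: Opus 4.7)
The strategy is immediate given the preceding proposition: since $A_n$ is invariant under $\Omega1$, $\Omega2$ and descending $\Omega3$ moves, any two diagrams of the same knot with $A_n(D_1) \neq A_n(D_2)$ must be connected by a sequence of moves that contains at least one ascending $\Omega3$ move. Symmetrically, $D_n(D_1) \neq D_n(D_2)$ forces the use of at least one descending $\Omega3$ move. So to prove the theorem it suffices to exhibit a single value of $n$ (the paper takes $n = 4$) for which both $A_n$ and $D_n$ distinguish the figure eight knot diagram $E$ from its inverse $E^{-1}$ (same underlying projection with reversed orientation).

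The plan is therefore:
\begin{enumerate}
    \item Fix the standard $4$-crossing alternating diagram $E$ of the figure eight knot, write down its Gauss diagram, and form $E^{-1}$ by reversing the orientation (equivalently, reversing the direction of the Gauss diagram core circle while keeping arrow endpoints, signs and under/over data fixed).
    \item Compute $A_4(E)$ and $A_4(E^{-1})$ directly from \"Ostlund's Gauss-diagram formula by enumerating the relevant $4$-arrow subconfigurations of each Gauss diagram and summing their signed contributions, then check that the two values are not equal. This step, combined with the invariance of $A_4$ under $\Omega1$, $\Omega2$ and descending $\Omega3$ moves, shows that any sequence of Reidemeister moves from $E$ to $E^{-1}$ must contain at least one ascending $\Omega3$ move.
    \item Repeat the enumeration with $D_4$ in place of $A_4$ to show $D_4(E) \neq D_4(E^{-1})$, which, by the analogous invariance statement, forces at least one descending $\Omega3$ move into any transforming sequence.
    \item Combine the two conclusions.
\end{enumerate}

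The main obstacle is the bookkeeping in step~2: \"Ostlund's invariants are defined as signed sums over subfamilies of arrows in the Gauss diagram satisfying specific linking patterns, and one must carefully enumerate these patterns in both $E$ and $E^{-1}$. The figure eight knot is small enough (only four chords) that this enumeration is finite and short, but the subtle point is making sure that reversing orientation genuinely changes the count: orientation reversal changes the cyclic order of chord endpoints but leaves signs of crossings unchanged, so one must verify that the specific $A_4$- and $D_4$-configurations used in \"Ostlund's definition are sensitive precisely to this cyclic information. Once both computations are carried out and the inequalities $A_4(E) \neq A_4(E^{-1})$ and $D_4(E) \neq D_4(E^{-1})$ are confirmed, the theorem follows immediately from the invariance properties, with no further argument needed.
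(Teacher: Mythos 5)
Your proposal follows exactly the route the paper takes (which is Östlund's own argument): the invariance of $A_4$ under descending $\Omega3$ moves and of $D_4$ under ascending ones reduces the theorem to checking that both invariants distinguish the figure eight diagram from its inverse, a finite Gauss-diagram computation that the paper simply cites from \cite{O01} rather than reproducing. The only thing left implicit in your write-up is that same computation, so the approach and its division of labor match the paper's.
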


It follows that
\begin{corollary}\label{condition-Ost}
    Let $K$ be the figure eight knot.
    Any $K$-generating subset of
    $$\{\Of{1a},\Ob{1a},\Of{1b},\Ob{1b},\Of{2a},\Ob{2a},
    \Of{3a}_{a},\Ob{3a}_{a},\Of{3a}_{d},\Ob{3a}_{d}\}$$
    contains at least one move from the set
    $\{\Of{3a}_{a},\Ob{3a}_{a}\}$
    and one move from the set
    $\{\Of{3a}_{d},\Ob{3a}_{d}\}$.
\end{corollary}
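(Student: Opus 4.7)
The plan is to derive the corollary as an immediate consequence of \"Ostlund's theorem, which supplies all of the geometric content. Let $A$ denote any $K$-generating subset of the enumerated $10$-element set, fix a diagram $D$ of the figure eight knot $K$ and a diagram $D'$ representing its inverse, and exploit the fact that $K$-generation of $A$ provides a sequence of moves drawn from $A$ transforming $D$ into $D'$.

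By \"Ostlund's theorem, every sequence of Reidemeister moves transforming $D$ into $D'$ must contain at least one ascending $\Omega3$ move and at least one descending $\Omega3$ move. Since the only $\Omega3$ moves appearing in our $10$-element set are $\Of{3a}_{a}$, $\Ob{3a}_{a}$, $\Of{3a}_{d}$, and $\Ob{3a}_{d}$, every $\Omega3$ move that occurs in the sequence belongs both to $A$ and to this four-element list, so at least one ascending and at least one descending entry from this list must actually occur and therefore lie in $A$.

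The one conceptual point that requires verification, and the main (though minor) obstacle, is that the ascending/descending status is preserved under time-reversal of an $\Omega3$ move: that is, the backward of an ascending move is ascending, and similarly for descending. This follows directly from Definition \ref{def:asc-desc}, because the status is determined by the order in which the three strands are traversed in the direction of the diagram's orientation, and performing an $\Omega3$ move alters neither the strands themselves nor their orientations. Consequently $\Of{3a}_{a}$ and $\Ob{3a}_{a}$ are both ascending while $\Of{3a}_{d}$ and $\Ob{3a}_{d}$ are both descending, which combined with the previous paragraph forces $A$ to intersect both $\{\Of{3a}_{a}, \Ob{3a}_{a}\}$ and $\{\Of{3a}_{d}, \Ob{3a}_{d}\}$, as claimed. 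No additional invariant needs to be constructed; the entire argument is a packaging of \"Ostlund's theorem into the language of $K$-generating subsets.
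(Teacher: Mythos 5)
Your proof is correct and follows the same route as the paper, which simply deduces the corollary from \"Ostlund's theorem that the figure eight diagram cannot be transformed into its inverse without both ascending and descending $\Omega3$ moves; your observation that ascending/descending status is unaffected by the forward/backward direction is the right (and only) point needing care. The only detail left implicit is that the inverse diagram is again a diagram of $K$ (the figure eight knot is invertible), which is what makes the $K$-generating hypothesis applicable to the pair of diagrams in question.
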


Still, having both $\Of{3a}_{a}$ and $\Of{3a}_{d}$ moves
(or $\Ob{3a}_{a}$ and $\Ob{3a}_{d}$) is sufficient
to meet both this condition and 
the condition presented in Corollary~\ref{condition-HN}.
Therefore the question of necessity of containing
both $\Of{3a}$ and $\Ob{3a}$ in
$K$-generating subsets of directed Polyak moves
remains open (even in the case of the figure eight knot).

\begin{bibdiv}
    \begin{biblist}
        \bib{A94}{article}{
            author={Arnold, Vladimir I.},
            title={Plane curves, their invariants,
            perestroikas and classifications},
            note={With an appendix by F. Aicardi},
            conference={
                title={Singularities and bifurcations},
            },
            book={
                series={Adv. Soviet Math.},
                volume={21},
                publisher={Amer. Math. Soc., Providence, RI},
            },
            date={1994},
            pages={33--91},
   
        } 
        \bib{H06}{article}{
            author={Hagge, Tobias J.},
            title={Every Reidemeister move is needed for each knot type},
            journal={Proc. Amer. Math. Soc.},
            volume={134},
            date={2006},
            number={1},
            pages={295--301 (electronic)},
            issn={0002-9939},
        }
        \bib{HHN12}{article}{
            author={Hayashi, Chuichiro},
            author={Hayashi, Miwa},
            author={Nowik, Tahl},
            title={Unknotting number and number of Reidemeister moves needed for
            unlinking},
            journal={Topology Appl.},
            volume={159},
            date={2012},
            number={5},
            pages={1467--1474},
            issn={0166-8641},
        }
        \bib{HN08}{article}{
            author={Hass, Joel},
            author={Nowik, Tahl},
            title={Invariants of knot diagrams},
            journal={Math. Ann.},
            volume={342},
            date={2008},
            number={1},
            pages={125--137},
            issn={0025-5831},
        }
        \bib{KJL15}{article}{
            author={Kim, Jieon},
            author={Joung, Yewon},
            author={Lee, Sang Youl},
            title={On generating sets of Yoshikawa moves 
                for marked graph diagrams of surface-links},
            journal={J. Knot Theory Ramifications},
            volume={24},
            date={2015},
            number={4},
            pages={1550018, 21},
            issn={0218-2165},
        }
        \bib{O01}{article}{
            author={{\"O}stlund, Olof-Petter},
            title={Invariants of knot diagrams and relations among Reidemeister
            moves},
            journal={J. Knot Theory Ramifications},
            volume={10},
            date={2001},
            number={8},
            pages={1215--1227},
            issn={0218-2165},
        }
        \bib{P10b}{article}{
            author={Polyak, Michael},
            title={Minimal generating sets of Reidemeister moves},
            journal={Quantum Topol.},
            volume={1},
            date={2010},
            number={4},
            pages={399--411},
            issn={1663-487X},
        }
        \bib{R27}{article}{
            author={Reidemeister, Kurt},
            title={Knoten und Gruppen},
            language={German},
            journal={Abh. Math. Sem. Univ. Hamburg},
            volume={5},
            date={1927},
            number={1},
            pages={7--23},
            issn={0025-5858},
        }
        \bib{V94}{article}{
            author={Vassiliev, Victor A.},
            title={Invariants of ornaments},
            conference={
                title={Singularities and bifurcations},
            },
            book={
                series={Adv. Soviet Math.},
                volume={21},
                publisher={Amer. Math. Soc., Providence, RI},
            },
            date={1994},
            pages={225--262},

        }
    \end{biblist}
\end{bibdiv}
\end{document}